\documentclass[11pt]{article}

\usepackage{amsmath,amsthm,amssymb,mathrsfs}
\newtheorem{df}{Definition}[section]
\newtheorem{lem}[df]{Lemma}

\newtheorem{pro}[df]{Proposition}
\newtheorem{thm}[df]{Theorem}

 \usepackage{fancyhdr}
\usepackage{atbegshi}
\usepackage{dsfont}
\usepackage{xcolor}

\newcommand{\R}{\mathbb{R}}
\author{Anouar Jeddi}

\title{Probabilistic large deviations approach to obstacle Hamilton-Jacobi equations}

\begin{document}
\maketitle
\begin{abstract}
In this paper, we derive a non-classical variational representation of a viscosity solution of a Hamilton–Jacobi equation with obstacle, in which the Hamiltonian may take infinite values. We study a nonlinear selection–mutation model with an exponential-decay mutation kernel, using a probabilistic Feynman–Kac representation of a pure jump process. To handle the singular character of this process, we rely on a non-standard tool from large deviations theory, which allows us to establish a Varadhan-type lemma. This, in turn, yields the variational representation of the viscosity solution.
\end{abstract}
\maketitle

\section{Introduction}
Our main goal is to derive, using a probabilistic method, the variational representation of a viscosity solution of the following constrained Hamilton–Jacobi equation with obstacle: for all $(t,x)\in(0,+\infty)\times  \mathbb{R}^d,$
\begin{equation}
    \label{eq:HJ3}
    \begin{cases}
      \min\Big( \partial_tu(t,x)-R(x,I)-\int_{\mathbb{R}^d}G(y)(e^{\nabla u(t,x).y}-1) dy, 1-\vert \nabla u(t,x)\vert \Big)=0,\\
      \max_{x\in \R^d}u(t,x)=0,~~\forall t>0,\\
      u(0,.)=u^0(.),
      \end{cases}
    \end{equation}
where $I$ is a function of bounded variation. \\

Equation \eqref{eq:HJ3} is therefore not a classical object: its Hamiltonian may take infinite values and is discontinuous in time, so that it falls outside the scope of the standard theory of viscosity solutions, and its well-posedness cannot be deduced from classical results. For background on the definition of viscosity solutions for Hamiltonians that are merely measurable, or discontinuous, in time, we refer to~\cite{Ishi,LP}, and to~\cite{G} for the classical theory of viscosity solutions. To overcome this difficulty, we develop a probabilistic method that not only allows us to derive this equation directly, but also yields a variational representation of its viscosity solution. This representation turns out to be a crucial tool for the subsequent analysis, in particular for establishing uniqueness (see \cite{CL}).\\

In the field of eco-evolutionary dynamics a Hamilton-Jacobi approach was emerged to study a concentration phenomenon in populations structured with traits and evolving under selection, mutation and competition (see, \cite{OPSB}). This approach considers mutations of order $\varepsilon$ where $\varepsilon$ is a small positive parameter, and shows that in large time the population density concentrates around some Dirac masses described by a viscosity solution of a Hamilton-Jacobi equation (see, for instance \cite{PB,BMP}). In \cite{BMP} an integro-differential model was considered assuming, that in a regime of small mutations and large time that the density $n_\varepsilon(t,x)$ at time $t$ and  trait $x$, satisfies that
\begin{equation}
\label{eq:model}
\begin{cases}
\varepsilon \partial_t n_\varepsilon(t,x)=R(x,I_\varepsilon(t))n_\varepsilon(t,x)+\int_{\mathbb{R}^d}G(y)(n_\varepsilon(t,x+\varepsilon y)-n_\varepsilon(t,x))dy,\\
I_\varepsilon(t)=\int_{\mathbb{R}^d}n_\varepsilon(t,y)dy,\\
n_\varepsilon(0,.)=n_\varepsilon^0
(.),
\end{cases} 
\end{equation}
where $R(,)$ is the growth rate which depends on traits and the total population size $I_\varepsilon$ and $G$ is a probability kernel. The analysis of \eqref{eq:model} was carried out in \cite{BMP} for a class of mutation kernels with super-exponential decay that making the analysis classical. However, kernels with slower decay are more interesting in terms of mathematical modeling because they allow mutations of big size to occur at a high rate\\
The analysis of kernels with slow decay was addressed only in a few different settings than \eqref{eq:model}, for instance singular kernels in \cite{MM, Sepideh} and fat-tailed kernels in \cite{BGHP}.\\

Recently, the author in \cite{J} studied kernels with exponential decay and derived the Hamilton–Jacobi equation \eqref{eq:HJ3} featuring an obstacle. This was achieved by proving that the Hopf-Cole transform $u_\varepsilon(t,x)=\varepsilon \log{n_\varepsilon(t,x)}$ converges along a subsequence of $\varepsilon$ to a continuous function $u,$ which is a viscosity solution of \eqref{eq:HJ3}.
   The derivation of this equation is very technical due to the non-finite nature of the Hamiltonian (see, \cite{J} for more details).\\
   
   In this work, we present a more direct method to derive \eqref{eq:HJ3} and establish a non trivial dynamic programming principle for the viscosity solution, based on a probabilistic approach.  In \cite{Freidlin,MF} Freidlin used a probabilistic approach to study the propagation phenomena in some reaction–diffusion equations, based on the Feynman–Kac representation and Varadhan's lemma. Moreover, in the context of population dynamics, this kind of approach has been used in \cite{NB} for a selection–mutation model with the Laplace operator, instead of a non-local operator, to study the concentration phenomena. The strategy of our approach is to first establish a Feynman-Kac formula based on a pure jump Markov process and use the Large deviation principle (LDP) combined with Varadhan's lemma to derive a variational representation of the viscosity solution of \eqref{eq:HJ3}.\\

The probabilistic representation of the integro-differential equation  \eqref{eq:model} is based on a pure Markov jump process
$
(X_t^\varepsilon)_{t\geq0},$
driven by a Poisson random measure with intensity \(ds\,G(dy)\). The next step is to establish a large deviation principle for the process \((X_t^\varepsilon)_{t\geq0}\).

Large deviation principles for pure jump Markov processes and Lévy processes are well understood under the strong Cramér condition, that is, under the assumption that exponential moments of all orders exist (see, for instance,~\cite{FK,FS}). This assumption, however, turns out to be too restrictive for our setting: since our kernel $G$ only exhibits exponential decay, the strong Cramér condition fails.
 Large deviation principles under the weaker assumption
\begin{equation}
  \label{eq:integrability assumption}
     \exists\, \alpha>0,\quad \forall \beta \in (-\alpha,\alpha),\quad \mathbb{E}\big(e^{\beta X^\varepsilon_t}\big)<\infty,\quad \forall t>0,
\end{equation}
have already been investigated for Lévy processes on the Skorokhod space $\mathbb{D}([0,+\infty),\mathbb{R}^d)$, although only with respect to weaker topologies than the usual $J_1$ topology (see, for instance,~\cite{LS,C,dA}).

In \cite{M}, an extended large deviation principle was established for compound Poisson processes under the weak Cramér assumption~\eqref{eq:integrability assumption}, in the space of functions of bounded variation endowed with the Borovkov metric (recalled below). Indeed, under this weaker assumption, the rate function is in general not \emph{good} (its sublevel sets fail to be compact), so that a classical LDP cannot hold, the extended LDP is, to our knowledge, the only available substitute in this setting. We recall below the definition of the Borovkov metric together with the notion of extended LDP in the space of functions of bounded variation.

This large deviation principle differs from the classical one at the level of the upper bound. More precisely, the extended large deviation principle established in \cite{M} only provides an upper bound for approximating sets rather than for arbitrary closed sets, and the corresponding rate function is not necessarily good. As a consequence, Varadhan's lemma cannot be derived by the classical argument, which crucially relies on the goodness of the rate function.

A natural way to circumvent this difficulty would be to establish exponential tightness directly with respect to the Borovkov metric. However, this turns out to be a delicate task in our framework, and we were not able to obtain it directly.

To overcome this obstacle, we adopt a different strategy. We first construct an approximating process $\overline{X}^\varepsilon$ associated with $X^\varepsilon$, for which exponential tightness can be established, not with respect to the Borovkov metric, but with respect to the (weaker) topology induced by the Lévy metric. We then combine this exponential tightness with the extended large deviation principle for $X^\varepsilon$ obtained under the Borovkov metric, in order to derive a version of Varadhan's lemma adapted to our setting.

Let us emphasize that this strategy is not classical: it relies on the interplay between two distinct, and comparatively weak, topologies, the Borovkov metric, for which the extended LDP holds, and the Lévy metric, for which exponential tightness of the approximating process can be obtained. To the best of our knowledge, such a combination has not been used before to derive a Varadhan-type lemma in this weak Cramér setting.

\subsection{Function spaces and LDP framework}

Let $T>0.$ We denote by $\mathbb{V}_T$ the space of all real-valued functions
$f:\mathbb{R}\rightarrow\mathbb{R}$ satisfying the following properties:
\begin{itemize}
    \item $f$ has bounded variation;
    \item $f$ is left-continuous;
    \item $f(t)=0$ for all $t\leq0$;
    \item $f(t)=f(T+)$ for every $t>T$.
\end{itemize}

Alongside the space $\mathbb{V}_T,$ we consider the class $\mathbb{V}^+_T\subset \mathbb{V}_T$ of nondecreasing functions $f\in \mathbb{V}_T.$

Every function $f\in \mathbb{V}_T^+$ admits the decomposition
\[
f=f^{+}-f^{-},
\]
where $f^{+},f^{-}\in \mathbb{V}_T^{+}$ are uniquely determined by the condition
\[
\operatorname{Var}(f)
=
\operatorname{Var}(f^{+})
+
\operatorname{Var}(f^{-}).
\]
We also introduce the product spaces
$$
\widetilde{\mathbb{V}}_T
=\{\widetilde{f}=(f^+,f^-), f^{+\atop -}\in \mathbb{V}_T^+ \},$$
We denote by $\mathbb{V}^a_T$ the space of absolutely continuous 
functions in $\mathbb{V}_T$, and by $\widetilde{\mathbb{V}}^a_T$ 
the space of absolutely continuous functions in $\widetilde{\mathbb{V}}_T$.\\
For every $f\in \mathbb{V}_T$ we define its graph 
\begin{equation}
\Gamma(f) = \left\{ (t, z) \in \R \times \R^d 
\;\middle|\; z \in [f(t^-), f(t)] \right\}.
\end{equation}
Let $f,g\in \mathbb{V}_T$ the Borovkov metric denoted  $\rho_B$ is defined by: 
\begin{equation}
  \rho_B(f, g) = d_H\bigl(\Gamma(f),\, \Gamma(g)\bigr),
\end{equation}
where $d_H$ is the Hausdorff distance in $R^{d+1},$  i,e  $\forall A,B\in\R^{d+1},$
$$
d_H(A, B) = \max\left( 
\sup_{a \in A} \inf_{b \in B} \|a - b\|,\; 
\sup_{b \in B} \inf_{a \in A} \|a - b\| 
\right).
$$
Let us now recall the definition of an extended LDP in $(\mathbb{V}_T,\rho_B)$.  Let $A\subset \mathbb{V}_T $ and $\eta>0,$ we define the $\eta$-neighborhood of $A$ in $(\mathbb{V}_T,\rho_B)$ as 
\begin{equation}
    \label{eq:approximation set}
    A_\eta=\{g \in \mathbb{V}_T / \inf_{g\in A}\rho_B(f,g)<\eta\},
\end{equation}
and, we define \begin{equation}
    A_+:=\lim_{\eta\to 0} A_\eta.
\end{equation}
We say that the process
$(X^\varepsilon)_{\varepsilon>0}$ satisfies an extended LDP in $(\mathbb{V}_T,\rho_B)$ with a lower semi-continuous rate function $J_T$ if: for all open subset $A\subset \mathbb{V}_T,$ we have that 
    \begin{equation}
        \limsup_{\varepsilon\to 0}\varepsilon\log{\mathbb{P}(X^\varepsilon\in A)}\leq -J_T(A_+):=-\lim_{\eta \to 0}\inf_{x\in \overline{A}_\eta}J_T(x),
    \end{equation}
    and \begin{equation}
        \liminf_{\varepsilon\to 0}\varepsilon\log{\mathbb{P}(X^\varepsilon\in A)}\geq-J_T(A):= -\inf_{x\in A}J_T(x).
    \end{equation}



We now recall the definition of the Lévy metric and the main results of \cite{M} that will be needed in the sequel.

To define the Lévy metric, we first associate  every
$\tilde f\in\tilde{\mathbb{V}}_T$ with
its canonical parametrization $
\hat f(v)
=
(\hat f_{0}(v),\hat f_{+}(v),\hat f_{-}(v)),~
v\in\mathbb R.$ We say that  $\hat f$ is a parametrization of $\tilde{f}$ if: 
\begin{itemize}
\item For all $v\in \R,$ 
 $\hat f(v) \in \Gamma_{\tilde{f}}.$ 
    \item For all  $v\in \R,$
$
\hat f(v)
=
(\hat f_{0}(v),\hat f_{+}(v),\hat f_{-}(v)),$
characterized by the identity
\begin{equation}
\hat f_{0}(v)
+
\hat f_{+}(v)
+
\hat f_{-}(v)
=
v.\end{equation}
\end{itemize}
The Lévy metric on $\widetilde{\mathbb{V}}_T$ is then defined by
\begin{equation}
\label{eq:lévy metric}
\widetilde{\rho}_{L}(\tilde f,\tilde g)
=
\sup_{v\in\mathbb R}
\left(
|\hat f_{0}(v)-\hat g_{0}(v)|
+
|\hat f_{+}(v)-\hat g_{+}(v)|
+
|\hat f_{-}(v)-\hat g_{-}(v)|
\right).
\end{equation}
The link between the Borovkov and Lévy metrics, established in Lemma~2.2 of~\cite{M}, is recalled below.
\begin{lem}
For any $\tilde{f}, \tilde{g} \in \widetilde{\mathbb{V}}_T$, the following inequalities hold:
\begin{equation}
\label{eq:metric comparaison}
\widetilde{\rho}_{L}(\tilde{f}, \tilde{f})
\;\ge\;
\widetilde{\rho}_{B}(\tilde{f}, \tilde{g})
\;\ge\;
\rho_B(f^+ - f^-,\, g^+ - g^-).
\end{equation}
\end{lem}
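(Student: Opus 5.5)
The statement contains an evident typo: the left-hand side should read $\widetilde{\rho}_{L}(\tilde f,\tilde g)$, not $\widetilde{\rho}_{L}(\tilde f,\tilde f)$. I will prove the two inequalities separately for that corrected form. Here $\widetilde{\rho}_{B}$ on $\widetilde{\mathbb{V}}_T$ is understood, as in \cite{M}, as the Hausdorff distance between the graphs $\Gamma_{\tilde f},\Gamma_{\tilde g}\subset\R^{3}$. This graph is the set of points $(t,a,b)$ lying on the completed curve of $(f^+,f^-)$, with jumps at time $t$ filled by the segment from $(f^+(t^-),f^-(t^-))$ to $(f^+(t),f^-(t))$. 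The norm on $\R^3$ is the $\ell^1$ norm, consistent with the sum appearing in \eqref{eq:lévy metric}.

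\emph{First inequality.} I will use the canonical parametrizations $\hat f,\hat g$, which are bijections of $\R$ onto $\Gamma_{\tilde f}$ and $\Gamma_{\tilde g}$; they exist because the coordinate sum $t+f^+(t)+f^-(t)$ is strictly increasing along the completed graph. Given $a\in\Gamma_{\tilde f}$, write $a=\hat f(v)$ for some $v$. Then $\hat g(v)\in\Gamma_{\tilde g}$ and $\|a-\hat g(v)\|_1\le\widetilde{\rho}_L(\tilde f,\tilde g)$, so $\sup_{a\in\Gamma_{\tilde f}}\inf_{b\in\Gamma_{\tilde g}}\|a-b\|\le\widetilde{\rho}_L$. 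The symmetric bound follows in the same way, which gives $\widetilde{\rho}_B\le\widetilde{\rho}_L$. The main point to check here is surjectivity of $v\mapsto\hat f(v)$ onto the graph, which follows from monotonicity together with filling in the jumps.

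\emph{Second inequality.} Consider the linear map $\Pi(t,a,b)=(t,a-b)$ from $\R^3$ to $\R^{2}$. It is $1$-Lipschitz from $\ell^1$ to $\ell^1$, since $|a-b-(a'-b')|\le|a-a'|+|b-b'|$.

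The key claim is $\Pi(\Gamma_{\tilde f})\subseteq\Gamma(f^+-f^-)$. Points with $t$ a continuity time map to $(t,f(t))$. For a jump at time $t$, the segment is sent onto the segment from $(t,f(t^-))$ to $(t,f(t))$, because $f^{\pm}(t)-f^{\pm}(t^-)$ are the positive and negative parts of the jump of $f$, and the segment $[f(t^-),f(t)]$ is exactly the fiber of $\Gamma(f)$ at $t$. Conversely, every point of $\Gamma(f)$ is attained, so in fact $\Pi(\Gamma_{\tilde f})=\Gamma(f)$.

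Since the Hausdorff distance does not increase under a $1$-Lipschitz surjection between these sets, we get $d_H(\Gamma(f),\Gamma(g))\le d_H(\Gamma_{\tilde f},\Gamma_{\tilde g})$.

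\emph{Main obstacle.} The hardest part is the bookkeeping in the equality $\Pi(\Gamma_{\tilde f})=\Gamma(f)$ at jump times, and making the choice of norm consistent. If the Euclidean norm is intended instead of $\ell^1$, the Lipschitz constant of $\Pi$ becomes $\sqrt2$. In that case I would replace it by working with $\ell^1$ throughout, or note that the minimal decomposition forces $a-a'$ and $b-b'$ to have opposite signs only in the harmless direction.
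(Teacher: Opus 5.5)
The paper does not prove this lemma. It quotes it from Lemma~2.2 of \cite{M}, and $\widetilde{\rho}_B$ is never even defined in the text. Your argument is therefore a self-contained alternative to a citation, and it is correct under your convention. Your correction of the typo $\widetilde{\rho}_L(\tilde f,\tilde f)\to\widetilde{\rho}_L(\tilde f,\tilde g)$ is also right.

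Your two steps both work. Coupling the two graphs through the common parameter $v$ gives $\widetilde{\rho}_B\le\widetilde{\rho}_L$. The linear map $\Pi(t,a,b)=(t,a-b)$ sends $\Gamma_{\tilde f}$ onto $\Gamma(f^+-f^-)$, which gives the second inequality.

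What your route buys over the citation is that it shows exactly what the lemma depends on. The first inequality holds for any norm on $\R^3$ dominated by $\|\cdot\|_1$, the Euclidean norm included. The second needs $\|\Pi p\|\le\|p\|$. This holds from $\ell^1(\R^3)$ into $\R^2$ equipped with either the $\ell^1$ or the Euclidean norm, so your proof also covers the (presumably Euclidean) norm the paper uses for $\rho_B$.

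Two points need correcting.

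\emph{First}, your fallback for the Euclidean case is wrong. If both Hausdorff distances use the Euclidean norm, the second inequality is false, even for minimal decompositions. Take $c>0$ and $f^+\equiv 0$, $f^-=c\,\mathbf{1}_{(0,\infty)}$, $g^+=c\,\mathbf{1}_{(0,\infty)}$, $g^-\equiv 0$. For large $t$, the point $(t,0,c)\in\Gamma_{\tilde f}$ is at distance $\sqrt{2}\,c$ from $(t,c,0)\in\Gamma_{\tilde g}$, and one checks that $\widetilde{\rho}_B(\tilde f,\tilde g)=\sqrt{2}\,c$. Meanwhile $(t,-c)\in\Gamma(f)$ is at distance $2c$ from $\Gamma(g)$, so $\rho_B(f,g)=2c>\widetilde{\rho}_B(\tilde f,\tilde g)$. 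With max norms one gets $c$ versus $2c$. So the $\ell^1$-type norm on $\R^3$, matching the sum defining $\widetilde{\rho}_L$, is not cosmetic: it is what makes the statement true. The ``opposite signs'' remark should be dropped.

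\emph{Second}, at a jump time you do not need minimality of the decomposition, and you should not invoke it. The lemma is stated for arbitrary pairs in $\widetilde{\mathbb{V}}_T$. Linearity of $\Pi$ already sends the segment joining the two endpoints in $\R^3$ onto the segment joining their images $f^+-f^-$ before and after the jump, whatever the pair.

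A related notational point: for left-continuous functions $f(t^-)=f(t)$, so the jump fibres must join $f(t)$ to $f(t+)$. The paper's $[f(t^-),f(t)]$, which you copied, is degenerate. Your bijectivity of $v\mapsto\hat f(v)$ relies on the correctly filled-in, connected graph.
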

\begin{lem}
    In the same probability space we denote a process $\overline{X}^\varepsilon=(\overline{X}^\varepsilon_+,\overline{X}^\varepsilon_-)\in \widetilde{\mathbb{V}}_T^a$ such that fro all $\eta>0,$ we have 
    \begin{equation}
        \label{eq:expo app}
        \limsup_{\varepsilon\to 0}\varepsilon\log{\mathbb{P}(\widetilde{\rho}_L(\overline{X}^\varepsilon,\widetilde{X}^\varepsilon)>\eta)}=-\infty,
    \end{equation}
    where $\widetilde{X}^\varepsilon=(X^\varepsilon_+,X^\varepsilon_-).$
\end{lem}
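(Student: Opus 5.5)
We will construct $\overline{X}^\varepsilon$ explicitly as an absolutely continuous "smoothing" of the jump components of $\widetilde{X}^\varepsilon=(X^\varepsilon_+,X^\varepsilon_-)$. Here $X^\varepsilon_\pm$ are the nondecreasing pure-jump processes that accumulate the positive and negative parts of the jumps $\varepsilon y$ driven by the Poisson random measure with intensity $\varepsilon^{-1}ds\,G(dy)$. The plan is to replace each jump, occurring at time $\tau_i$ with size $\varepsilon |y_i|$, by a linear ramp of slope $1$ started at $\tau_i$. Concretely, set
\[
\overline{X}^\varepsilon_\pm(t)=\sum_i \min\bigl((t-\tau_i)^+,\,\varepsilon y_i^\pm\bigr),
\]
where overlapping ramps are allowed to add up. This process is nondecreasing, absolutely continuous, and lies in $\widetilde{\mathbb{V}}^a_T$ after truncating at $T$ in the canonical way.

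The key observation concerns the Lévy metric $\widetilde\rho_L$, which compares the curves along the arc-length-type parametrization $v=\hat f_0+\hat f_++\hat f_-$. A vertical jump segment of $\Gamma_{\tilde f}$ and a slope-one ramp of the same height are close in this parametrization. The discrepancy comes only from a time shift, which is bounded by the total "delay" accumulated by the ramps. Along the parametrization, the vertical segment and the ramp have the same increment in $\hat f_\pm$, and the time coordinate $\hat f_0$ lags by at most the cumulative ramp lengths still "pending." The first step is therefore a deterministic lemma: $\widetilde\rho_L(\overline{X}^\varepsilon,\widetilde X^\varepsilon)\le C\,\sup_{t\le T}D^\varepsilon(t)$. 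Here $D^\varepsilon(t)$ is the total remaining ramp height not yet realized at time $t$. In the worst case one can simply bound it by $\max_i \varepsilon|y_i|$ plus the contribution of overlapping ramps. To make overlaps harmless, I would instead use ramps of slope $\varepsilon^{-1/2}$ (or any slope $\lambda_\varepsilon\to\infty$ with $\varepsilon\lambda_\varepsilon\to\infty$ suitably). Then each ramp has duration $\varepsilon|y_i|/\lambda_\varepsilon$, and the time shift is at most $\lambda_\varepsilon^{-1}\times$ (total variation of $\widetilde X^\varepsilon$ on $[0,T]$).

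The second step is probabilistic. We must show that
\[
\mathbb{P}\bigl(\lambda_\varepsilon^{-1}\mathrm{Var}_{[0,T]}(\widetilde X^\varepsilon)>\eta\bigr)
\]
is superexponentially small. The total variation equals $\sum_{\tau_i\le T}\varepsilon|y_i|$, a compound Poisson sum with $\sim T/\varepsilon$ terms, to which we apply Chebyshev's exponential inequality. By the weak Cramér assumption~\eqref{eq:integrability assumption}, $\int e^{\beta|y|}G(dy)<\infty$ for $\beta<\alpha$, so
\[
\mathbb{P}(\cdot)\le \exp\Bigl(-\tfrac{\beta}{\varepsilon}\eta\lambda_\varepsilon+\tfrac{T}{\varepsilon}\int(e^{\beta|y|}-1)G(dy)\Bigr).
\]
Multiplying by $\varepsilon$ gives $-\beta\eta\lambda_\varepsilon+C_T\to-\infty$ as $\lambda_\varepsilon\to\infty$, which yields~\eqref{eq:expo app}. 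Note that only a fixed $\beta<\alpha$ is needed, which is why the Lévy-metric approximation works even though the strong Cramér condition fails.

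The main obstacle I expect is the deterministic comparison in the first step. The canonical parametrizations must be handled carefully, especially when ramps overlap or when a ramp is cut at $T$; at the terminal time, the convention $f(t)=f(T+)$ for $t>T$ forces us to finish the last ramps, or simply to allow an extra $O(\lambda_\varepsilon^{-1}\mathrm{Var})$ error there. My first attempt would be to construct an explicit monotone reparametrization matching the points of $\Gamma_{\widetilde X^\varepsilon}$ and $\Gamma_{\overline X^\varepsilon}$ with equal $(\hat f_+,\hat f_-)$ coordinates. I would then check that the mismatch in $\hat f_0$, and consequently in $v$, is bounded by the total ramp duration $\lambda_\varepsilon^{-1}\mathrm{Var}$.
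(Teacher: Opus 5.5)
\textbf{Verdict.} There is a genuine gap: the deterministic comparison step fails, and with the slopes you propose the construction itself does not satisfy \eqref{eq:expo app}.

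\textbf{Where it fails.} The failing step is your claim $\widetilde\rho_L(\overline X^\varepsilon,\widetilde X^\varepsilon)\le C\lambda_\varepsilon^{-1}\mathrm{Var}_{[0,T]}(\widetilde X^\varepsilon)$. The time lag between the two curves is indeed small: it is at most the longest ramp duration. But the Lévy metric compares points of the two curves in $(t,z_+,z_-)$ at equal $v=t+z_++z_-$, so it also sees the route taken in the $(z_+,z_-)$-plane. That route changes as soon as ramps of opposite sign overlap.

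Take a positive jump of height $h$ at $\tau_1$ and a negative jump of height $h$ at $\tau_2=\tau_1+\theta$, with $\lambda\theta$ much smaller than $h$. The graph of the jump process moves (relative to its starting point) from $(0,0)$ to $(h,0)$ and then to $(h,h)$. Your two ramps instead raise both coordinates simultaneously. At $v=\tau_2+3h/2$ the jump process sits at $(\tau_2,h,h/2)$, while the ramp process has $\hat g_+\approx\hat g_-\approx 3h/4$. Hence $\widetilde\rho_L$ is approximately at least $h/2$, whereas $\lambda^{-1}\mathrm{Var}=2h/\lambda$. So the reparametrization with matching $(\hat f_+,\hat f_-)$-coordinates that you planned to build does not exist in this situation.

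\textbf{Why this breaks the lemma, not just the proof.} Suppose $\varepsilon\log\lambda_\varepsilon\to0$. This covers $\lambda_\varepsilon=\varepsilon^{-1/2}$, which incidentally violates your own condition $\varepsilon\lambda_\varepsilon\to\infty$, and any polynomial rate. Consider the event that a positive and a negative jump, both of height between $4\eta$ and $5\eta$, occur within time $\eta/(10\lambda_\varepsilon)$ of each other. This event has probability of order $\varepsilon^{-2}\lambda_\varepsilon^{-1}e^{-8\eta/\varepsilon}$, and on it $\widetilde\rho_L>\eta$. Therefore $\liminf_{\varepsilon\to0}\varepsilon\log\mathbb P(\widetilde\rho_L(\overline X^\varepsilon,\widetilde X^\varepsilon)>\eta)\ge-8\eta>-\infty$.

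\textbf{How to fix it.} The missing idea is to prevent overlaps. Two options work.
\begin{itemize}
\item Give the $i$-th ramp duration $d_i=\min(\delta_\varepsilon,\tau_{i+1}-\tau_i,T-\tau_i)$ with $\delta_\varepsilon\to0$. Between ramps the two graphs coincide. On a single ramp of height $h$ and duration $d$ the discrepancy is at most $2hd/(h+d)\le 2d$. So $\widetilde\rho_L\le2\delta_\varepsilon$ surely, and \eqref{eq:expo app} holds trivially with no Chernoff bound. This also removes the terminal-time issue.
\item Keep your ramps but take $\varepsilon\log\lambda_\varepsilon\to\infty$. Any overlap requires two of the roughly $T/\varepsilon$ jump times to lie within $\max_i\varepsilon|y_i|/\lambda_\varepsilon$ of each other, which is then superexponentially unlikely.
\end{itemize}

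\textbf{Relation to the paper.} The paper gives no argument of its own here; it imports $\overline X^\varepsilon$ from \cite{M}, Section 4. Proposition 1.3 (the local upper bound and exponential tightness) is quoted for that specific process. A home-made $\overline X^\varepsilon$ proves the lemma as stated, but you would have to establish those properties for it before it could serve in the proof of Theorem 4.3.
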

Note that from \eqref{eq:metric comparaison} the approximation 
\eqref{eq:expo app} holds also for the Borovkov metric $\widetilde{\rho}_B$.
For the construction of the process $\overline{X}^\varepsilon$ 
and the proof of the exponential approximation, 
we refer to \cite{M}, Section 4.\\
The following proposition states the key properties 
of the approximated process $\overline{X}^\varepsilon$, 
as given in \cite{M}, Lemma~7.1.
\begin{pro}
    \begin{itemize}
    \item For all $\tilde{f}\in \mathbb{\widetilde{V}_T},$ we have 
    \begin{equation}
        \label{eq:lower ldp for app pr}
    \displaystyle\lim_{\delta\downarrow0}\limsup_{\varepsilon \rightarrow 0}\log
P(\overline{{X}}^\varepsilon \in B_L({\tilde f},{\delta}))\le-J(\tilde f).
    \end{equation}
        \item Let $M>0,$ there exists a compact set $\mathcal{K}_M\subset \widetilde{\mathbb{V}}_T$ such that
        \begin{equation}
\label{eq:expontighntess apro}
    \limsup_{\varepsilon\to 0}\varepsilon\log{\mathbb{P}(\overline{X}^\varepsilon\in \mathcal K_M^c)}\leq -M.
\end{equation}
    \end{itemize}
\end{pro}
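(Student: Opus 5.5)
We prove the two items separately; both are transferred from the original compound Poisson process $\widetilde X^\varepsilon=(X^\varepsilon_+,X^\varepsilon_-)$ to the approximating process $\overline X^\varepsilon$, following the construction of \cite{M}, Section~4. Recall that there $\overline X^\varepsilon$ is built by truncating the jumps of $X^\varepsilon$ at a level $c(\varepsilon)\to\infty$ and linearly interpolating the remaining jumps on short time windows. The resulting paths are absolutely continuous, nondecreasing in each component, and have increments controlled by sums of independent exponentially integrable variables.

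For the first item (the local upper bound), fix $\tilde f\in\widetilde{\mathbb V}_T$ and $\delta>0$. The plan is to use the exponential approximation \eqref{eq:expo app} to write
\[
\mathbb P\big(\overline X^\varepsilon\in B_L(\tilde f,\delta)\big)\le \mathbb P\big(\widetilde X^\varepsilon\in B_L(\tilde f,2\delta)\big)+\mathbb P\big(\widetilde\rho_L(\overline X^\varepsilon,\widetilde X^\varepsilon)>\delta\big).
\]
The second term is super-exponentially small. For the first term we apply a Chebyshev-type exponential bound. Choose finitely many times $0<t_1<\dots<t_k\le T$ and parameters $\lambda^\pm_i$ with $|\lambda^\pm_i|<\alpha$, as allowed by the weak Cramér condition \eqref{eq:integrability assumption}. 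The independent-increment structure then gives
\[
\varepsilon\log\mathbb E\, e^{\frac1\varepsilon\sum_i \lambda_i\cdot(\widetilde X^\varepsilon_{t_i}-\widetilde X^\varepsilon_{t_{i-1}})}\to \sum_i (t_i-t_{i-1})\Lambda(\lambda_i).
\]
Closeness in the Lévy metric keeps the increments close to those of $\tilde f$, up to an error that vanishes with $\delta$, because of the parametrization. Optimizing over $\lambda$ and refining the partition recovers $J(\tilde f)$, which is the supremum of these finite-dimensional Legendre-type expressions, including the singular parts that are penalised linearly at rate $\alpha$.

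For the second item (exponential tightness), we take $\mathcal K_M=\{\tilde g\in\widetilde{\mathbb V}_T:\ g^\pm(T)\le C_M\}\cap\{\omega_{\tilde g}\text{ controlled}\}$. Here we rely on the fact that in the Lévy metric, sets of pairs of nondecreasing functions with uniformly bounded total variation are relatively compact: this is the Helly selection principle, which applies because the parametrized graphs are $1$-Lipschitz curves, and the Arzelà–Ascoli theorem applies to them. So it essentially suffices to bound $\mathbb P(\overline X^\varepsilon_\pm(T)>C)$. This quantity is bounded by $\mathbb P(X^\varepsilon_\pm(T)>C-\eta)$ plus a negligible term, and by the exponential Chebyshev inequality with $\beta<\alpha$ it is at most $e^{-(\beta (C-\eta)-T\Lambda(\beta))/\varepsilon}$. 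Choosing $C$ large makes this exponent at most $-M$. We then check that the set is closed, so that it is genuinely compact.

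The main obstacle is the first item. There the Lévy-ball constraint must be converted into constraints on finitely many increments, uniformly in $\varepsilon$, while handling jumps of $\tilde f$ that are approached by the continuous paths of $\overline X^\varepsilon$. At jump points the parametrization identity $\hat f_0+\hat f_++\hat f_-=v$ is used to locate the time windows. I would first treat absolutely continuous $\tilde f$, and then handle the jump part by a separate estimate showing that a jump of size $a$ costs at least $\alpha a$.
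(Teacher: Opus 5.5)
Your second item has a genuine gap at the compactness step. Arzel\`a--Ascoli applied to the $1$-Lipschitz parametrizations only yields a uniform limit \emph{curve}. With the joint parametrization $(\hat f_0,\hat f_+,\hat f_-)$ used in the paper, that curve need not be the canonical parametrization of any element of $\widetilde{\mathbb V}_T$. Take $g_n^+=a\mathbf{1}_{(t,\infty)}$ and $g_n^-=a\mathbf{1}_{(t+1/n,\infty)}$. Then $g_n^\pm(T+)=a$, and the parametrizations converge uniformly to a curve which, at the frozen time $t$, first raises the $+$ coordinate by $a$ and then the $-$ coordinate by $a$. Componentwise convergence forces the only candidate limit to be $(a\mathbf{1}_{(t,\infty)},a\mathbf{1}_{(t,\infty)})$. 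If, as in the definition of $\Gamma$, its completed graph at $t$ is the straight segment, it stays at $\tilde\rho_L$-distance $a$ (compare $(t,a,0)$ with $(t,a/2,a/2)$ at $v=t+a$). So $\{g^\pm(T+)\le C\}$ is not relatively compact for this metric, and showing it is closed does not help.

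Your unspecified modulus clause cannot repair this either. A jump of $X^\varepsilon$ of size at least $a$ followed within time $h$ by a jump of size at most $-a$ has probability of order $\varepsilon^{-2}h\,e^{-2a/\varepsilon}$, i.e.\ exponential rate only $2a$, with $a$ arbitrary. Moreover $\overline X^\varepsilon$ inherits these near-L-shapes through the exponential approximation, so no condition excluding them is negligible at level $M$.

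Helly's selection theorem is the right tool only for a componentwise L\'evy distance such as $\rho_L(f^+,g^+)+\rho_L(f^-,g^-)$. There, in each two-dimensional graph, a monotone limit curve at frozen time is automatically a vertical segment. Alternatively, one can work in a space of monotone parametrized curves rather than pairs of functions. You have to pin down which structure \cite{M} actually uses; the paper gives no argument of its own here and only quotes Lemma~7.1 of \cite{M}. With the right structure, your exponential Chebyshev bound on $X^\varepsilon_\pm(T)$ is all that is needed, and the modulus clause should be dropped.

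Your first item is a legitimate self-contained route, and it runs in the opposite direction to the paper, which deduces the local bound for $\widetilde X^\varepsilon$ from the one for $\overline X^\varepsilon$. You instead bound $\widetilde X^\varepsilon$ directly by an exponential Chebyshev inequality on increments between continuity points of $\tilde f$, where membership in a small L\'evy ball does pin the increments down. You then use the supremum-over-partitions representation of $J$.

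One step is wrong as written: the restriction $|\lambda_i^\pm|<\alpha$. The components $X^\varepsilon_\pm$ are nondecreasing, so their exponential moments are finite for all $\lambda^\pm<\alpha$, and you need $\lambda^\pm\to-\infty$ to recover $J$. On a flat stretch of $\tilde f$ the correct cost per unit time is $\int G=1$, whereas $\lambda^\pm\in(-\alpha,\alpha)$ only gives $\int G(z)(1-e^{-\alpha|z|})\,dz<1$. Your bound would therefore stop strictly short of $-J(\tilde f)$. With $\lambda_i^\pm\in(-\infty,\alpha)$, fixed before $\delta\to0$, the argument closes. The partition supremum also already captures every singular part, Cantor parts included, so the separate jump estimate in your last paragraph is unnecessary.
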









This paper is structured as follows. Section 2 recalls some preliminary results. Section 3 is devoted to the Feynman-Kac representation. In Section 4, we derive our Varadhan's lemma, and Section 5 is dedicated to the link between the variational representation and the Hamilton-Jacobi equation \eqref{eq:HJ3}.
\subsection{Assumptions}
   \begin{enumerate}
        \item \label{item:13} There exist two positive constants $I_m$ and $I_M$ such that 
        \begin{equation}
            \label{eq:growth bound }
            \inf_{x\in \R^d} R(x,I_m)=0,~~~~~~\sup_{x\in \R^d}R(x,I_M)=0,
        \end{equation}
        and \begin{equation}
        \label{eq:intitial size population bound}
            I_m/2\leq I_\varepsilon(0)\leq 2I_M,~~\forall \varepsilon>0.
        \end{equation}
       \item \label{item:23} We assume that $R$ is Lipschitz-continuous functions, in space with Lipschitz norms $\Vert R \Vert_{\text{Lip}}$ and uniformly w.r.t $I\in [I_m/2,2I_M]$, and that there exists a positive constant $\overline{R},$ such that for any $x\in \R^d,$
       \begin{equation}
       \label{growth canditions}
  \sup_{I\in [I_m/2,2I_M]}\Vert R(.,I)\Vert_{W^{2,\infty}(\R^d)} \leq \overline{R} \quad.
       \end{equation}
      \item \label{item:33} The growth rate $R(.,.)$ is differentiable w.r.t $I$ for all $x\in \R^d,$ and
      there exist positive constant $A_1, A_2$ and $A_3$ such that for any $I,I_1,I_2\in \R^+$ and $x\in \R^d$
      \begin{equation}
      \label{eq:growth monotonie in I}
        \vert R(x,I_1)-R(x,I_2)\vert \leq A_1\vert I_1-I_2\vert,~~~-A_2\leq \partial_IR(x,I)\leq -A_3.
 \end{equation}
        
        \item \label{item:43} The kernel $G$ has an exponential decay, i.e., $G(x)=f(x)e^{-\vert x\vert },$  such that  $\int_{\mathbb{R}^d}G(y)dy=1,$
         $G$ is a positive continuous function in $\R^d,$ and for all $x\in \R^d$ \begin{equation} f(x)=f(-x),~~~~~
            0<\min\limits_{x\in \R^d}f(x)\leq f(x)\leq \sup_{x\in \R^d}f(x)<+\infty,
            \end{equation}
  which implies that 
     \begin{equation}
           \label{kernel conditions} \int_{\mathbb{R}^d}G(y)e^{ay}dy<+\infty~~\forall \vert a\vert<1,~\text{and}~~\int_{\mathbb{R}^d}G(y)e^{ay}dy=+\infty ~\forall \vert a\vert \geq 1.
        \end{equation}
            
   \item \label{item:53} There exist  positive constants $A$ and $B_1$ such that, for any $x\in \R^d$
    \begin{equation}
    \label{eq:sublinear growth initial conditon}
        u^0_{\varepsilon}(x)\leq -A\vert x\vert +B_1.
    \end{equation}
   \item \label{item:63} There exists a positive constant  $L$ such that for all $\varepsilon>0,$ the  function $u^0_\varepsilon$ is $L$-Lipschitz-continuous
    \item \label{item:73} The sequence of functions $(u^0_\varepsilon)_{\varepsilon>0}$ converges locally uniformly to a continuous function $u^0,$ as $\varepsilon \rightarrow 0$.
\end{enumerate}
\section{Preliminary results}
In this section we give some classical results about \eqref{eq:model}. 
\begin{pro}
   Under Assumptions \ref{item:13}-\ref{item:43}, the equation \eqref{eq:model} has unique solution $n_\varepsilon\in C^1(\mathbb{R}^+\times \mathbb{R}^d).$  Moreover, along a subsequence, $(I^\varepsilon)_\varepsilon$ converges pointwise to a function $I$ that is locally of bounded variation on $\mathbb{R}^+$.
\end{pro}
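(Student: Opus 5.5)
We prove the two assertions in order: first well-posedness of \eqref{eq:model} for fixed $\varepsilon$, then uniform bounds on $I_\varepsilon$ and a uniform BV estimate, and finally Helly's selection theorem.

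\emph{Existence and uniqueness.} Fix $\varepsilon>0$ and work in $C_b(\R^d)$, or in $L^1(\R^d)\cap L^\infty(\R^d)$. The mutation operator
$$\mathcal{M}_\varepsilon n(x)=\int_{\R^d}G(y)\big(n(x+\varepsilon y)-n(x)\big)dy$$
is bounded, with norm at most $2$, since $G$ is a probability density. By Assumption~\ref{item:33}, the reaction term $n\mapsto R(x,\int n)\,n$ is locally Lipschitz as long as $I$ stays in a bounded interval. Cauchy--Lipschitz in the Banach space therefore gives a unique local solution. It is $C^1$ in $t$, and continuous in $x$ because $n^0_\varepsilon$ is and the convolution preserves continuity. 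Positivity is preserved: writing the equation as $\varepsilon\partial_t n=(R-1)n+G_\varepsilon*n$, Duhamel's formula shows $n\ge0$ whenever $n^0\ge0$. Global existence follows once $I_\varepsilon$ is bounded a priori. Integrating the equation in $x$ makes the mutation term vanish (Fubini, $\int G=1$), so
$$\varepsilon\dot I_\varepsilon=\int R(x,I_\varepsilon)\,n_\varepsilon\,dx .$$
If $I_\varepsilon>I_M$, then by \eqref{eq:growth bound } and the monotonicity in $I$ we have $R(\cdot,I_\varepsilon)<0$, hence $\dot I_\varepsilon<0$; symmetrically, $\dot I_\varepsilon>0$ when $I_\varepsilon<I_m$. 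Combined with \eqref{eq:intitial size population bound}, this yields $I_m/2\le I_\varepsilon(t)\le 2I_M$ for all $t$, uniformly in $\varepsilon$.

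\emph{BV estimate (main obstacle).} The naive bound $|\dot I_\varepsilon|\le C/\varepsilon$ is useless. Following the standard argument of \cite{BMP}, set $J_\varepsilon=\varepsilon\dot I_\varepsilon=\int R\,n_\varepsilon$ and differentiate:
$$\varepsilon\dot J_\varepsilon=\int R\,\varepsilon\partial_t n_\varepsilon+\varepsilon\dot I_\varepsilon\int\partial_I R\, n_\varepsilon .$$
The first term equals $\int R^2 n_\varepsilon+\int n_\varepsilon\,\mathcal{M}^*_\varepsilon R$. Because $G$ is symmetric and has finite second moment, a Taylor expansion together with the $W^{2,\infty}$ bound \eqref{growth canditions} gives $|\mathcal{M}^*_\varepsilon R|\le C\varepsilon^2$; the first-order term cancels by symmetry of $f$. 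The second term is $J_\varepsilon\int\partial_IR\,n_\varepsilon\le -A_3 I_m J_\varepsilon/2$ when $J_\varepsilon\ge0$. Applying the same computation to the negative part $J_\varepsilon^-$ gives
$$\varepsilon\frac{d}{dt}J_\varepsilon^-\le -c\,J_\varepsilon^-+C\varepsilon^2,$$
hence $J_\varepsilon^-(t)\le J_\varepsilon^-(0)e^{-ct/\varepsilon}+C\varepsilon^2/c$. Integrating in time,
$$\int_0^T(\dot I_\varepsilon)^-\,dt\le \frac{1}{\varepsilon}\int_0^T J^-_\varepsilon\,dt\le \frac{J^-_\varepsilon(0)}{c}+C\varepsilon T,$$
which is bounded since $J_\varepsilon(0)$ is bounded by $\overline R\,I_\varepsilon(0)$. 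Since $I_\varepsilon$ is itself bounded, $\int(\dot I_\varepsilon)^+=I_\varepsilon(T)-I_\varepsilon(0)+\int(\dot I_\varepsilon)^-$ is bounded as well, so $\mathrm{TV}_{[0,T]}(I_\varepsilon)$ is bounded uniformly in $\varepsilon$.

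The delicate points are justifying the moment bound for $\mathcal{M}^*_\varepsilon R$ under the exponential tail, which is fine since $\int|y|^2G<\infty$, and the sign bookkeeping when taking the negative part.

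\emph{Compactness.} Helly's selection theorem, applied on each interval $[0,T]$ together with a diagonal extraction, then yields a subsequence along which $I_\varepsilon\to I$ pointwise on $\R^+$, with $I$ locally of bounded variation.
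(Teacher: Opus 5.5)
Your proposal is correct and follows the standard Barles--Mirrahimi--Perthame argument that the paper cites without reproducing: the a priori bounds $I_m/2\le I_\varepsilon\le 2I_M$ from the sign of $R$, the $O(\varepsilon^2)$ bound on the mutation operator applied to $R$ (via symmetry and finite second moment of $G$ together with the $W^{2,\infty}$ bound), the differential inequality for $(\varepsilon\dot I_\varepsilon)^-$, and Helly's theorem with diagonal extraction. The one loose end is that you obtain only continuity, not $C^1$ regularity, in $x$; since the nonlocal operator does not regularize, that part of the statement needs $n^0_\varepsilon\in C^1$, an assumption the paper also leaves implicit.
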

The proof relies on standard arguments and  is independent of the specific form of the kernel (see, e.g., \cite{BMP}).
\section{Feynman-Kac representation}
This section is devoted to a probabilistic representation of \eqref{eq:model}.
Let $(\Omega,\mathcal{F},\mathbb{P})$ be a probabilistic space, and $N$ be a Poisson random measure on $\mathbb{R}^+\times \mathbb{R}^d$ with intensity measure $dsG(dy).$ Let $X^\varepsilon$ be a continuous random walk with jump rate $\frac{1}{\varepsilon},$ and i.i.d jump sizes distributed as $\varepsilon Z,$ where the random variable $Z$ with density $G(.).$ In other words
\begin{equation}
\label{eq:PSDE}
    X^\varepsilon_t=\varepsilon \int_{0}^{t/\varepsilon}\int_{\mathbb{R}^d}yN(ds,dy).
\end{equation}
We denote the process started at $x$ by $X^{\varepsilon,x}.$
\begin{pro}
    Under Assumption \ref{item:43}, the process $X^\varepsilon$ is well-defined, with independent, stationary increments.
\end{pro}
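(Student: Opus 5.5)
The natural route is to identify $X^\varepsilon$ as a compound Poisson process and read off both claims from the properties of the Poisson random measure $N$. Since $N$ has intensity $ds\,G(dy)$ and $G$ is a probability density by Assumption \ref{item:43}, the total mass of the intensity on $[0,t/\varepsilon]\times\mathbb{R}^d$ is $t/\varepsilon<\infty$. So, almost surely, $N$ restricted to $[0,t/\varepsilon]\times\mathbb{R}^d$ has finitely many atoms $(s_i,Z_i)$. Their number $N_{t/\varepsilon}:=N([0,t/\varepsilon]\times\mathbb{R}^d)$ is Poisson with mean $t/\varepsilon$, and the marks $Z_i$ are i.i.d.\ with density $G$, independent of the atom times. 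Hence the integral in \eqref{eq:PSDE} is a finite sum almost surely,
\begin{equation*}
X^\varepsilon_t=\varepsilon\sum_{i=1}^{N_{t/\varepsilon}} Z_i ,
\end{equation*}
and it does not depend on any convergence issue. This gives well-definedness. As an additional check, which also matters for the later large deviation analysis, $\mathbb{E}|X^\varepsilon_t|\le \varepsilon\,(t/\varepsilon)\int|y|G(y)\,dy<\infty$, because the exponential decay of $G$ gives a finite first moment. More precisely, by \eqref{kernel conditions}, Campbell's formula gives
\begin{equation*}
\mathbb{E}\big(e^{\beta\cdot X^\varepsilon_t}\big)=\exp\Big(\frac{t}{\varepsilon}\int_{\mathbb{R}^d}G(y)\big(e^{\varepsilon\beta\cdot y}-1\big)dy\Big)<\infty \quad\text{for } \varepsilon|\beta|<1,
\end{equation*}
which is exactly the weak Cramér condition \eqref{eq:integrability assumption}. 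The paths are piecewise constant and right-continuous with finitely many jumps on compacts, hence càdlàg; for the left-continuous convention of $\mathbb{V}_T$ one can take the left-continuous modification. The process started at $x$ is simply $X^{\varepsilon,x}=x+X^\varepsilon$.

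For independent and stationary increments, fix $0\le t_0<t_1<\dots<t_n$. Each increment
\begin{equation*}
X^\varepsilon_{t_k}-X^\varepsilon_{t_{k-1}}=\varepsilon\int_{t_{k-1}/\varepsilon}^{t_k/\varepsilon}\int_{\mathbb{R}^d} y\,N(ds,dy)
\end{equation*}
is a measurable functional of $N$ restricted to the disjoint set $(t_{k-1}/\varepsilon,t_k/\varepsilon]\times\mathbb{R}^d$. By the independence property of Poisson random measures on disjoint sets, these increments are independent. For stationarity, the intensity $ds\,G(dy)$ is invariant under time shifts. So the law of $N$ restricted to $(s/\varepsilon,(s+h)/\varepsilon]$, after shifting time by $-s/\varepsilon$, coincides with the law of $N$ restricted to $(0,h/\varepsilon]$. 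Hence $X^\varepsilon_{s+h}-X^\varepsilon_s$ has the same law as $X^\varepsilon_h$. Alternatively, one can verify both properties at once through the joint characteristic function, which factorizes as a product of terms $\exp\big(\tfrac{t_k-t_{k-1}}{\varepsilon}\int G(y)(e^{i\varepsilon\xi_k\cdot y}-1)dy\big)$.

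There is no real obstacle here. The only point requiring care is that the integral against $N$ be defined pathwise. This is ensured by the finiteness of the intensity measure on bounded time intervals (which uses $\int G=1$), together with the integrability of $|y|$ under $G$, which follows from its exponential decay.
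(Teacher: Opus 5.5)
Your proof is correct, but it reaches well-definedness by a different route from the paper.

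The paper applies Markov's inequality to $\sup_{t\in[0,T]}|X^\varepsilon_t|$. It dominates this supremum by $\varepsilon\int_0^{T/\varepsilon}\int_{\mathbb{R}^d}|y|\,N(ds,dy)$, whose expectation is $T\int|y|G(y)\,dy<\infty$, and concludes that the paths are a.s. finite on $[0,T]$. So the paper relies on the first moment of $G$. It does not address independence or stationarity of the increments at all, leaving them as standard.

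You use only $\int G=1$. The intensity has finite mass $t/\varepsilon$ on $[0,t/\varepsilon]\times\mathbb{R}^d$, so the stochastic integral is a.s. a finite sum and $X^\varepsilon$ is an explicit compound Poisson process. The first and exponential moments enter only as side remarks. You then derive the increment properties from the independence of $N$ on disjoint sets and the time-shift invariance of $ds\,G(dy)$.

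Your route is more elementary and actually covers the whole statement, including the increment claims the paper omits. It also gives the càdlàg structure and the identification $X^{\varepsilon,x}=x+X^\varepsilon$ used later. The paper's moment argument is more robust in one respect: it makes the integral absolutely convergent, so it would still work for an intensity of infinite total mass with $\int|y|\,\nu(dy)<\infty$. That generality is not needed here.

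One small inaccuracy: your closing sentence says the integrability of $|y|$ under $G$ is needed for the pathwise definition. With a finite intensity it is not; it is needed only for your moment bounds.
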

\begin{proof}
The well-posedness until any time $T>0,$ is a direct consequence of \ref{item:43}. In fact that, let $M>0,$ we have 
\begin{align*}
    \mathbb{P}(\sup_{t\in[0,T]}\vert X^\varepsilon_t\vert >M)&\leq \frac{1}{M}\mathbb{E}(\sup_{t\in[0,T]}\vert X^\varepsilon_t\vert)\\
    &\leq \frac{1}{M}\mathbb{E}\Big(\varepsilon\int_{0}^{T/\varepsilon}\int_{\mathbb{R}^d}\vert y\vert N(ds,dy)\Big)\\
    &\leq \frac{T}{M}\int_{\mathbb{R}^d}\vert y\vert G(y)dy.
\end{align*}
Note that from  Assumption \ref{item:43}, we have $\int_{\mathbb{R}^d}\vert y\vert G(y)dy<+\infty.$ We deduce that the process $X^\varepsilon$ is well defined.
\end{proof}
\begin{thm}
    Under Assumptions \ref{item:13}-\ref{item:63}, let $n_\varepsilon$ be the unique solution of \eqref{eq:model}, then
    \begin{equation}
        \label{eq:Fynman-Kac}
n_\varepsilon(t,x)=\mathbb{E}_x(e^{\frac{1}{\varepsilon}(u^0_\varepsilon(X^\varepsilon_t)+\int_{0}^{t}R(X^\varepsilon_s,I_\varepsilon(t-s))ds)}),~~\forall (t,x)\in (0,+\infty)\times \mathbb{R}^d.
    \end{equation} 
\end{thm}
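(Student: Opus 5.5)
The plan is to verify the representation by the standard martingale argument: compose the solution with the process run backward in time and show the result is a true martingale.

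\textbf{Setup.} Fix $t>0$ and $x\in\R^d$, and write $X_s=x+X^\varepsilon_s$ for the process started at $x$. By the compound Poisson structure in \eqref{eq:PSDE}, the generator of $X^\varepsilon$ acting on bounded measurable $\varphi$ is
\[
\mathcal L^\varepsilon\varphi(x)=\frac{1}{\varepsilon}\int_{\R^d}G(y)\big(\varphi(x+\varepsilon y)-\varphi(x)\big)\,dy .
\]
Dividing \eqref{eq:model} by $\varepsilon$, the solution satisfies
\[
\partial_t n_\varepsilon=\frac{1}{\varepsilon}R(x,I_\varepsilon(t))\,n_\varepsilon+\mathcal L^\varepsilon n_\varepsilon ,
\]
with initial datum $n_\varepsilon(0,\cdot)=e^{u^0_\varepsilon/\varepsilon}$ coming from the Hopf--Cole setting. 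For $s\in[0,t]$ I introduce
\[
M_s=n_\varepsilon(t-s,X_s)\exp\Big(\frac1\varepsilon\int_0^sR(X_r,I_\varepsilon(t-r))\,dr\Big).
\]
The identity \eqref{eq:Fynman-Kac} is exactly $\mathbb E(M_0)=\mathbb E(M_t)$. So it suffices to prove that $(M_s)_{s\in[0,t]}$ is a martingale.

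\textbf{Itô decomposition.} I apply the change-of-variables formula for the pure jump semimartingale $X$ to the $C^1$-in-time function $(s,z)\mapsto n_\varepsilon(t-s,z)$, multiplied by the absolutely continuous exponential factor. This is legitimate because $X$ has finitely many jumps on $[0,t]$ almost surely, with rate $1/\varepsilon$, and $s\mapsto I_\varepsilon(t-s)$ is continuous, since $n_\varepsilon$ is $C^1$ and $I_\varepsilon=\int n_\varepsilon$. The formula gives
\[
dM_s=E_s\Big(-\partial_t n_\varepsilon+\tfrac1\varepsilon R\,n_\varepsilon+\mathcal L^\varepsilon n_\varepsilon\Big)(t-s,X_{s})\,ds+d\mathcal M_s ,
\]
where $E_s$ is the exponential factor and $\mathcal M$ is the integral of
\[
E_{s}\big(n_\varepsilon(t-s,X_{s-}+\varepsilon y)-n_\varepsilon(t-s,X_{s-})\big)
\]
against the compensated measure $N(ds/\varepsilon,dy)-\varepsilon^{-1}ds\,G(y)dy$. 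The drift term vanishes by the equation, so $M$ is a local martingale.

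\textbf{Main obstacle: upgrading to a true martingale.} This is the step I expect to require the most care. The plan is to prove a uniform bound on $[0,t]\times\R^d$. By Assumption~\ref{item:53}, $n_\varepsilon(0,\cdot)\le e^{B_1/\varepsilon}$. By Assumption~\ref{item:23}, $|R|\le\overline R$. A comparison principle for the linear equation (with $I_\varepsilon$ frozen as a given continuous coefficient), tested against the supersolution $e^{(B_1+\overline R s)/\varepsilon}$, then yields
\[
0\le n_\varepsilon(s,\cdot)\le e^{(B_1+\overline R s)/\varepsilon}.
\]
The exponential factor is likewise bounded by $e^{\overline R t/\varepsilon}$. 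Hence the integrand of $\mathcal M$ is bounded, and the compensated integral of a bounded predictable integrand against a finite-intensity Poisson measure is a square-integrable martingale.

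If one prefers to avoid the comparison principle, I would instead localize with the stopping times $\tau_k=\inf\{s:|X_s|>k\}$, which satisfy $\tau_k\uparrow\infty$ by the well-posedness proposition above. I would then pass to the limit $k\to\infty$ by dominated convergence, using the same boundedness of $n_\varepsilon$ (which is in any case implicit in the well-posedness statement).

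\textbf{Conclusion.} Once $M$ is a true martingale, $\mathbb E(M_t)=M_0=n_\varepsilon(t,x)$, and $M_t$ is exactly the expression inside the expectation in \eqref{eq:Fynman-Kac}, which gives the claim.
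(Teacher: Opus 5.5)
Your proposal is correct and follows the same route as the paper: the same process $M_s$, the Itô/change-of-variables formula with the drift cancelled by the equation, and boundedness of $n_\varepsilon$ and of the exponential factor to upgrade the compensated jump integral to a true martingale, so that $\mathbb{E}(M_t)=M_0=n_\varepsilon(t,x)$. The differences are minor: you derive the bound on $n_\varepsilon$ yourself, using the supersolution $e^{(B_1+\overline R s)/\varepsilon}$, where the paper cites Lemma 4.1 of \cite{J}, and you write the compensator with the correct $1/\varepsilon$ time scaling, which the paper leaves implicit.
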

  \begin{proof}
      Let $(t,x)\in(0,+\infty)\times \mathbb{R}^d$ fixed. Let us consider  the stochastic process defined as follows:
      \begin{equation}
    M^{t,\varepsilon}_s=e^{\frac{1}{\varepsilon}\int_{0}^{s}R(X^{\varepsilon,x}_\tau,I_\varepsilon(t-\tau))d\tau}n_\varepsilon(t-s,X^{\varepsilon,x}_s),~~\forall s\leq t.
      \end{equation}
      We prove that $s\mapsto M^{t,\varepsilon}_s$ is a martingale. Under Assumptions \ref{item:13}-\ref{item:63}, $M^{t,\varepsilon}$ is bounded (see Lemma 4.1 in \cite{J}, for a bound of $n_\varepsilon$), and by Ito's lemma and \eqref{eq:model}, we obtain that
\begin{equation}
\begin{aligned}
M^{t,\varepsilon}_s&=n_\varepsilon(t,x)+  \int_{0}^{s}\Bigg(\frac{1}{\varepsilon}R(X^{\varepsilon,x}_\tau,I^\varepsilon(t-\tau))M^{t,\varepsilon}_\tau-e^{\frac{1}{\varepsilon}\int_{0}^{\tau}R(X^{\varepsilon,x}_u,I^\varepsilon(t-u))du}\partial_tn_\varepsilon(t-\tau,X^{\varepsilon,x}_\tau)\Big)d\tau\\
&+\int_{0}^{s}\int_{\mathbb{R}^d}e^{\frac{1}{\varepsilon}\int_{0}^{\tau}R(X^{\varepsilon,x}_u,I^\varepsilon(t-u))du}(n_\varepsilon(t-\tau,X^{\varepsilon,x}_{\tau{^-}}+\varepsilon y)-n_\varepsilon(t-\tau,X^{\varepsilon,x}_{\tau{^-}}))N(d\tau,dy)\\
&=n_\varepsilon(t,x)+\int_{0}^{s}\int_{\mathbb{R}^d}e^{\frac{1}{\varepsilon}\int_{0}^{\tau}R(X^{\varepsilon,x}_u,I^\varepsilon(t-u))du}(n_\varepsilon(t-\tau,X^{\varepsilon,x}_{\tau{^-}}+\varepsilon y)-n_\varepsilon(t-\tau,X^{\varepsilon,x}_{\tau{^-}}))\widetilde{N}(d\tau,dy),
 \end{aligned}
\end{equation}
where $\widetilde{N}$ is the compensated Poisson random measure. Hence $M^{t,\varepsilon}$ is a martingale. Using the fact that $\mathbb{E}(M^{t,\varepsilon}_t)=\mathbb{E}(M^{t,\varepsilon}_0),$ we obtain the Feynman-Kac formula \eqref{eq:Fynman-Kac}.    
\end{proof}      
\section{Varadhan's Lemma}
We start this section by stating the large deviation principle.
\begin{thm}
    Under Assumptions \ref{item:43}. The family of laws of the process $(X^\varepsilon)_{\varepsilon>0}$ satisfies an extended large deviation principle on $\mathbb{V}$ endowed with the Borovkov metric $\rho_B$, with the rate function, for all $t>0,$ and $y\in \mathbb{V}_t,$
    \begin{equation}
        \label{eq:LDP rate}
        J_t(y)=
        \begin{cases}
\int_{0}^{t}L(y'_s)ds,~~\text{where $y\in AC([0,t],\mathbb{R}^d)$, }\\
+\infty,~~~~~\text{otherwhere},
        \end{cases}
    \end{equation}
    where $L$ denotes the Legendre transform on $\mathbb{R}^d$ of $H(p)=\int_{\mathbb{R}^d}G(z)(e^{p.z}-1)dz,$ i.e 
    \begin{equation}
    \label{eq:leegendre traansform}
        L(q)=\sup_{p\in \mathbb{R}^d}\{p.q-H(p)\},
        \end{equation}
        and $AC([0,t],\mathbb{R}^d)$ denotes the set of absolutely continuous functions on $[0,t]$.
\end{thm}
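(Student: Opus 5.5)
The plan is to deduce this extended LDP from the result of \cite{M} for compound Poisson processes under the weak Cramér condition, after checking that $X^\varepsilon$ fits that framework. By \eqref{eq:PSDE}, $X^\varepsilon_t=\varepsilon S_{N(t/\varepsilon)}$, where $S_n=\sum_{k\le n}Z_k$ with $Z_k$ i.i.d. of density $G$ and $N(\cdot)$ a unit-rate Poisson process. This is exactly the scaled compound Poisson process considered in \cite{M}. So the first step is to verify the hypotheses there. The key one is the weak Cramér condition \eqref{eq:integrability assumption}. By \eqref{kernel conditions}, $\mathbb{E}(e^{\beta\cdot Z})=\int G(y)e^{\beta\cdot y}dy<\infty$ for $|\beta|<1$. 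Hence for every $t$,
\[
\mathbb{E}\big(e^{\beta\cdot X^\varepsilon_t/\varepsilon}\big)=\exp\Big(\tfrac{t}{\varepsilon}H(\beta)\Big)<\infty,\qquad |\beta|<1,
\]
so the condition holds with $\alpha=1$. The symmetry $f(x)=f(-x)$ gives $H(p)=H(-p)$, so the admissible region $\{|p|<1\}$ is a symmetric neighbourhood of $0$, as required. The path space $\mathbb{V}_t$ (left-continuous, BV, with the given boundary normalization) and the metric $\rho_B$ coincide with those of \cite{M} once we use the left-continuous modification of $X^\varepsilon$, and the Borovkov distance is insensitive to this modification.

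The second step is to identify the rate function. In \cite{M}, the rate function for a compound Poisson process with cumulant $H$ is, on absolutely continuous paths, $\int_0^t L(y'_s)\,ds$ with $L=H^*$. On general BV paths it also has a singular part, $\int \sup_{|p|<1}\ldots$, namely $\int_0^t \Lambda_\infty(dy^s)$, where $\Lambda_\infty(q)=\lim_{\lambda\to\infty}L(\lambda q)/\lambda=\sup_{p\in\overline{\{H<\infty\}}}p\cdot q$, which here equals $|q|$. Here the statement diverges from \cite{M}: \eqref{eq:LDP rate} sets $J_t=+\infty$ off $AC$. I would therefore check carefully which form \cite{M} provides. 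If the singular term is present, two consequences follow. First, the lower bound over open sets is unaffected, since any BV path is a $\rho_B$-limit of AC paths with nearly equal action; one connects jumps by steep linear pieces, and $L(q)\sim|q|$ at infinity makes the cost converge. Second, the upper bound uses $J_t(A_+)=\lim_\eta\inf_{\overline A_\eta}J_t$, and this infimum is the same for both versions of $J_t$, because AC paths are $\rho_B$-dense and the cost is approximated. So the two bounds are equivalent for the two formulations. I would state this reduction as a small lemma. The same linear-interpolation argument then shows that the lower semicontinuous envelope of \eqref{eq:LDP rate} is the \cite{M} rate, which settles the lower semicontinuity requirement.

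The main obstacle is this reconciliation between the paper's rate (infinite off $AC$) and the full rate of \cite{M}. It requires an approximation showing that
\[
\inf_{x\in\overline{A}_\eta}J_t(x)\quad\text{and}\quad\inf_{x\in A}J_t(x)
\]
are unchanged when the singular part is included. This reduces to showing that for any BV path $y$ and $\delta>0$ there is an AC path $\tilde y$ with $\rho_B(y,\tilde y)<\delta$ and $\int L(\tilde y')\le \int L(y'_{ac})+\int|dy^s|+\delta$. To build $\tilde y$, I would replace each jump, and a Cantor part approximated by finitely many small jumps, with a segment of slope of order $1/\delta$ inside a time window of length $\le\delta$. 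I would then control the cost using $L(q)\le |q|+C\log|q|$ for large $|q|$. That bound comes from the exponential tail $G\asymp e^{-|x|}$ with bounded prefactor $f$, since $H(p)$ blows up only logarithmically as $|p|\uparrow1$. I would also need to check that the Hausdorff distance between graphs stays below $\delta$, and this is immediate from the construction.
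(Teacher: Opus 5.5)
Your route is the paper's: its proof consists only of citing Theorem 1.1 in \cite{M}. You invoke the same result after correctly checking the weak Cram\'er condition with $\alpha=1$, via $\mathbb{E}\big(e^{\beta\cdot X^\varepsilon_t/\varepsilon}\big)=e^{tH(\beta)/\varepsilon}$ for $|\beta|<1$.

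What you add, and the paper does not address, is the mismatch between two rates. The rate of \cite{M}, call it $J_M$, carries on non-AC paths the singular term $\int|dy^s|$; here $|q|$ is the recession function of $L$, i.e. the support function of $\{H<\infty\}=\{|p|<1\}$. The stated $J_t$ is $+\infty$ off $AC$.

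Your reconciliation of the two bounds is sound and can be sharpened:
\begin{itemize}
\item The lower bound with $J_t$ needs no approximation, since $J_t\ge J_M$ gives $-\inf_A J_t\le -\inf_A J_M$.
\item For the upper bound, the infima over $\overline{A}_\eta$ need not coincide at fixed $\eta$. But $\overline{A}_\eta\subset A_{2\eta}$ with $A_{2\eta}$ open, so your density lemma gives $\inf_{\overline{A}_{2\eta}}J_t\le \inf_{\overline{A}_\eta}J_M$, hence $J_t(A_+)=J_M(A_+)$.
\end{itemize}
The same lemma is what the end of the proof of Theorem 4.3 tacitly needs, to pass from a supremum over $\mathbb{V}_t$ to one over $AC$ paths.

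Two of your claims are wrong, however.

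First, identifying the lower semicontinuous envelope of $J_t$ with $J_M$ does not ``settle the lower semicontinuity requirement''. It shows that $J_t$ is \emph{not} lower semicontinuous. Take $d=1$, $y=\mathbf{1}_{(1/2,\infty)}$ and $y^n(t)=\min\{1,\max\{0,n(t-1/2)\}\}$. Then $\rho_B(y^n,y)\le 1/n$ and $J_1(y^n)=L(n)/n+(1-1/n)L(0)\le 1$, using $L(0)=0$ and $L(q)\le|q|$, while $J_1(y)=+\infty$. The paper's definition of an extended LDP requires a lower semicontinuous rate, so the statement cannot hold literally for this $J_t$. What your argument actually proves is the extended LDP with rate $J_M$, plus validity of both bounds when $J_M$ is replaced by $J_t$. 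That is all that is used afterwards, and you should state it that way.

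Second, your justification of $L(q)\le|q|+C\log|q|$ is incorrect. $H$ does not blow up logarithmically: for $d=1$ and $f\equiv 1/2$, $H(p)=p^2/(1-p^2)$. Moreover, an upper bound on the growth of $H$ would give a lower bound on $L$, not an upper one. What you need is simply $L(q)\le|q|$. This follows from $H\ge 0$ and $H=+\infty$ off the open unit ball (as in the proof of Lemma 5.3), and it bounds the cost of a ramp directly by $\tau L(h/\tau)\le|h|$.

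Finally, like the paper, you take for granted that \cite{M} applies for $d>1$. The framework recalled in Section 1.1 (Jordan decomposition $f=f^+-f^-$, canonical parametrization) is one-dimensional. So ``exactly the process considered in \cite{M}'' is justified only for $d=1$.
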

The proof of this result is given in Theorem 1.1 in \cite{M}.

\begin{lem} Under assumptions \ref{item:13}-\ref{item:73}
   For all $T>0,$ the functional 
\begin{equation}
    \Phi_\varepsilon:\mathbb{V}_T\to \mathbb{R},~
    y\mapsto \int_{0}^{T}R(y_s,I_{T-s}^\varepsilon)ds+u^0_\varepsilon(y_T),
\end{equation}
is continuous on $\mathbb{V}_T$ endowed with the Borovkov metric $\rho_B$. Moreover, we have along a subsequence of $\varepsilon$ that, the functional $\Phi_{\varepsilon_k}$ converge locally uniformly in $(\mathbb{V}_T,\rho_B)$ to the continuous functional $\Phi$ defined  for all $y\in \mathbb{V}_T$ by  

\begin{equation}
\Phi(y)=:\int_{0}^{T}R(y_s,I_{T-s})ds+u^0(y_T),
\end{equation}
where $I$ is the BV function given in Proposition 2.1.  
\end{lem}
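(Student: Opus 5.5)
The idea is to control $\Phi_\varepsilon(y)-\Phi_\varepsilon(z)$ by $\rho_B(y,z)$. The time integral is compared through a measure-of-disagreement argument; the terminal term uses the fact that all functions in $\mathbb{V}_T$ are frozen after time $T$.

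\emph{Step 1: the integral term.} Fix $y\in\mathbb{V}_T$ and let $z\in\mathbb{V}_T$ with $\rho_B(y,z)<\delta$. If $s$ and $s'$ satisfy $|s-s'|\le\delta$, then $|z_{s'}-y_s|\le\delta$. Since $y$ has bounded variation, it has at most countably many jumps, and only finitely many of them exceed any fixed size $\eta$. Let $E_\eta$ be the set of times within distance $2\delta$ of such a big jump. Off $E_\eta$ I aim to show $|y_s-z_s|\le \omega_y(2\delta)+\delta$, where $\omega_y$ is the modulus of oscillation of $y$ away from its big jumps (taken up to $\eta$). This yields
\[
\Big|\int_0^T R(y_s,I^\varepsilon_{T-s})-R(z_s,I^\varepsilon_{T-s})\,ds\Big|\le \|R\|_{\mathrm{Lip}}\,T(\omega_y(2\delta)+\eta+\delta)+2\overline{R}\,|E_\eta|.
\]
The measure $|E_\eta|$ is at most $4\delta$ times the number of big jumps, so it tends to $0$ as $\delta\to0$. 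This uses the uniform Lipschitz bound and the uniform sup bound on $R$ from Assumption~\ref{item:23}, together with the fact that $I^\varepsilon$ stays in $[I_m/2,2I_M]$. Because these bounds do not depend on $\varepsilon$ or on the time variable of $I^\varepsilon$, the resulting modulus is uniform in $\varepsilon$.

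\emph{Step 2: the terminal term.} Functions in $\mathbb{V}_T$ are constant on $(T,\infty)$, so the graph of $z$ contains the segment $\{(t,z_{T+}):t>T\}$. Taking the point $(T+1,y_T)$ on the graph of $y$, its nearest point on $\Gamma(z)$ must lie at a time $>T$, which forces $|y_T-z_T|\le\rho_B(y,z)$. This requires interpreting $y_T$ as $y(T+)$; I would fix this convention explicitly at the start. Assumption~\ref{item:63} then gives $|u^0_\varepsilon(y_T)-u^0_\varepsilon(z_T)|\le L\rho_B(y,z)$. Combined with Step 1, this shows that each $\Phi_\varepsilon$ is continuous and that the family $(\Phi_\varepsilon)$ is equicontinuous at every $y$.

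\emph{Step 3: convergence.} Choose the subsequence $\varepsilon_k$ from Proposition 2.1, along which $I^{\varepsilon_k}\to I$ pointwise. Take a compact set $K\subset(\mathbb{V}_T,\rho_B)$. For fixed $y$, dominated convergence together with the Lipschitz continuity of $R$ in $I$ from Assumption~\ref{item:33} gives
\[
\int_0^T |R(y_s,I^{\varepsilon_k}_{T-s})-R(y_s,I_{T-s})|\,ds\le A_1\int_0^T|I^{\varepsilon_k}_{T-s}-I_{T-s}|\,ds\to0 .
\]
This bound is uniform in $y$, so the integral part converges uniformly on all of $\mathbb{V}_T$. For the terminal term, the set $\{y_T: y\in K\}$ is bounded by Step 2, and Assumption~\ref{item:73} gives local uniform convergence of $u^0_{\varepsilon_k}$ to $u^0$, hence uniform convergence on $K$. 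Continuity of $\Phi$ follows from the same Steps 1 and 2, or as a uniform limit of the equicontinuous family.

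\emph{Main obstacle.} The delicate point is Step 1: making rigorous that Hausdorff closeness of the graphs implies pointwise closeness of $y$ and $z$ outside a small-measure set near the large jumps of $y$. I expect to handle this using left-continuity and the finiteness of the set of jumps larger than $\eta$, choosing $\delta$ after $\eta$.
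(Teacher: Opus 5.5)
Your argument is correct in substance and uses the paper's decomposition: an integral term plus a terminal term, with uniform convergence of the integral part on all of $\mathbb{V}_T$ from the $A_1$-Lipschitz dependence of $R$ on $I$, and local uniform convergence of $u^0_{\varepsilon_k}$ for the terminal part. Where you differ is in proving continuity of the integral term. The paper only proves sequential continuity: if $\rho_B(y^n,y)\to0$, then $y^n_s\to y_s$ at every continuity point $s$ of $y$, hence for a.e.\ $s$, and dominated convergence with $|R|\le\overline{R}$ concludes. Your big-jump/small-oscillation bookkeeping instead gives a modulus of continuity independent of $\varepsilon$, i.e.\ equicontinuity of $(\Phi_\varepsilon)$. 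The lemma does not need this, since uniformity in $\varepsilon$ already comes from the Lipschitz bound in $I$. So the ``main obstacle'' you identify does not arise on the paper's route. What your version buys is an $\varepsilon$-independent radius of the kind used as $\delta_1$ in the upper bound of Theorem~4.3. On the terminal term you are more careful than the paper. Its identity $y^n_T=y^n_{T'}$ fails for a left-continuous path jumping at $T$. In fact $y\mapsto y(T)$ is not $\rho_B$-continuous: a unit jump at $T-1/n$ converges in $\rho_B$ to a unit jump at $T$, while the values at $T$ are $1$ and $0$. Your convention $y_T:=y(T+)$, together with $|y(T+)-z(T+)|\le\rho_B(y,z)$, is what actually makes this step valid.

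Two points need repair. First, the opening sentence of Step 1 is false as written. Hausdorff closeness does not give $|z_{s'}-y_s|\le\delta$ for every $s'$ with $|s-s'|\le\delta$; take $z=y$ and $s$ a jump time. What holds is an existential statement, and it should run from $\Gamma(z)$ to $\Gamma(y)$: for each $s$ there are $s'$ with $|s-s'|<\delta$ and a point $w$ of $\Gamma(y)$ above $s'$ with $|z_s-w|<\delta$. Hence $|z_s-y_s|\le\delta+\operatorname{osc}(y,[s-\delta,s+\delta])$, and only the oscillation of the fixed $y$ enters. That this oscillation is eventually $O(\eta)$ away from the jumps larger than $\eta$ follows from the existence of one-sided limits by a compactness argument. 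Second, Step 3 gives uniform convergence on compact sets, but $(\mathbb{V}_T,\rho_B)$ is not locally compact, and Theorem~4.3 uses uniform convergence on a $\rho_B$-neighbourhood $V^\delta_y$. Your Step 2 bound gives $\{z_T:\rho_B(z,y)<r\}\subset\overline{B}(y_T,r)$. So the same argument yields uniform convergence on every $\rho_B$-ball, and you should state the result in that form.
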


\begin{proof}
Fix $\varepsilon>0.$
Let us now prove the continuity of $\Phi.$  Let $(y^n)_{n\ge 1}$ and $y$ be elements of $\mathbb{V}_T$ such that
\[
\rho_{B}(y^n, y) \longrightarrow 0 \quad \text{as } n \to \infty.
\]
Since $y$ is of bounded variation, it has at most a countable number of jumps, and we have, for all points $s$ of continuity of $y$, that
\begin{equation}
    y^n_s\rightarrow y_s,~~\text{as}~ n\rightarrow +\infty,
\end{equation}
Furthermore, $x\mapsto R(x,.)$ is continuous and bounded, then by dominated convergence theorem we have 
\begin{equation}
    \int_{0}^{T}R(y^n_s,I^\varepsilon_{T-s})ds\to  \int_{0}^{T}R(y_s,I^\varepsilon_{T-s})ds,~~\text{as}~n\rightarrow +\infty.
\end{equation}
Let us prove now the continuity of the second part of the functional $\Phi.$ Let $T'>T.$ Since $y$ is constant in $(T,+\infty),$ then $y$ is continuous at $T'.$ Moreover, the convergence in the Borovkov metric implies that
\begin{equation*}
    y^n_{T'}\to y_{T'}, ~~\text{as}~n\rightarrow +\infty.
    \end{equation*}
     Since $T'>T$ and $y^n,y\in \mathbb{V}_T.$ Thus 
        \begin{equation}
            y^n_{T}= y^n_{T'}\to y_{T'}= y_{T}, ~~\text{as}~n\rightarrow +\infty.
 \end{equation}
 Finally $\Phi$ is continuous in $(\mathbb{V}_T,\rho_B).$\\
For each $y\in \mathbb{V}_T,$ the convergence of $\Phi_{\varepsilon_k}(y)$ to $\Phi(y)$ is given by the convergence of $I_{\varepsilon_k}$ and the assumption \ref{item:73}. Moreover, since $I\mapsto R(.,I)$ is Lipschitz-continuous, then the integral term converges globally in $(\mathbb{V}_T,\rho_B).$ Moreover  $u^0_\varepsilon $ convergences locally uniformly and $y\mapsto y_T$ is continuous with $\rho_B.$ Therefore the convergence is locally uniformly in $(\mathbb{V}_T,\rho_B).$

\end{proof}


We now state our main result, which extends Varadhan's lemma.
\begin{thm}
    Under Assumptions \ref{item:13}- \ref{item:73}, for all $(t,x)\in (0,+\infty)\times \mathbb{R}^d,$ we have 
    \begin{equation}
    \label{eq:variationnal formulation}
        \lim\limits_{k\rightarrow +\infty} u_{\varepsilon_k}(t,x)=u(t,x):=\sup\limits_{y \in AC([0,t],\mathbb{R}^d),y_0=x}\{u^0(y_t)+\int_{0}^{t}R(y_s,I_{t-s})ds-\int_{0}^{t}L(y'_s)ds\}.
    \end{equation}
\end{thm}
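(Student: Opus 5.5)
By the Feynman--Kac formula (Theorem 3.2), $u_\varepsilon(t,x)=\varepsilon\log n_\varepsilon(t,x)=\varepsilon\log\mathbb{E}\big(e^{\frac1\varepsilon\Phi_\varepsilon(x+X^\varepsilon)}\big)$, where $\Phi_\varepsilon$ is the functional of Lemma 4.2 with $T=t$. The plan is to prove a Varadhan-type lemma: a lower bound $\liminf_k \varepsilon_k\log\mathbb{E}(e^{\Phi_{\varepsilon_k}(x+X^{\varepsilon_k})/\varepsilon_k})\ge\sup_y\{\Phi(y)-J_t(y-x)\}$ and a matching upper bound. Two preliminary reductions come first. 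The bounds $R\le \overline R$ and $u^0_\varepsilon\le -A|x|+B_1$ give $\Phi_\varepsilon\le \overline R t+B_1$ uniformly. Because $\Phi_{\varepsilon_k}\to\Phi$ locally uniformly in $(\mathbb{V}_t,\rho_B)$, we may replace $\Phi_{\varepsilon_k}$ by $\Phi$ on any $\rho_B$-bounded region, up to an error $o(1)$. The contribution from outside such a region is then controlled by a crude tail estimate. Since $\Phi(y)\le \overline R t - A|y_t|+B_1$ and $\mathbb{E}e^{\beta|X^\varepsilon_t|/\varepsilon}\le e^{tC_\beta/\varepsilon}$ for $\beta<1$, we get the bound $\mathbb{E}(e^{\Phi/\varepsilon}\mathds 1_{\{\sup|X^\varepsilon|>M\}})\le e^{-c(M)/\varepsilon}$ with $c(M)\to\infty$ (using $A>0$ and, for the sup, Doob's inequality on the exponential martingale).

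\textbf{Lower bound.} This direction is standard and uses only the extended LDP lower bound. Fix $y$ with $J_t(y-x)<\infty$ and $\delta>0$, and let $B=B_B(y,\delta)$ be the Borovkov ball around $y$. By continuity of $\Phi$ (Lemma 4.2), $\Phi\ge\Phi(y)-\eta(\delta)$ on $B$, with $\eta(\delta)\to0$. Then
$\varepsilon\log\mathbb{E}(\cdots)\ge \Phi(y)-\eta(\delta)-o(1)+\varepsilon\log\mathbb{P}(x+X^\varepsilon\in B)$, and the LDP lower bound on the open set $B$ gives $\liminf\ge\Phi(y)-\eta(\delta)-J_t(y-x)$. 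Letting $\delta\to0$ and taking the sup over $y$ yields $\liminf u_{\varepsilon_k}\ge u$.

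\textbf{Upper bound (main obstacle).} Here the extended LDP upper bound alone is insufficient, because $J_t$ is not good and the bound only controls $A_+$. The plan is to transfer everything to the approximating process $\overline X^\varepsilon$ of Lemma 1.2. First, by the exponential approximation \eqref{eq:expo app} together with \eqref{eq:metric comparaison}, $\rho_B(\overline X^\varepsilon_+-\overline X^\varepsilon_-,X^\varepsilon)\le\eta$ except on an event of superexponentially small probability. On that bad event, Hölder's inequality applied with the uniform exponential bound above (using $e^{q\Phi/\varepsilon}$ for some $q>1$, controlled by the tail estimate) makes the contribution negligible. On the good event, continuity of $\Phi$ in $\rho_B$ (uniform continuity on the relevant bounded set, which needs a little care) lets us replace $\Phi(x+X^\varepsilon)$ by $\Phi(x+\overline X^\varepsilon_+-\overline X^\varepsilon_-)+o_\eta(1)$. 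Next, by Proposition 1.3, $\mathcal K_M$ is compact in the Lévy metric, so the part where $\overline X^\varepsilon\notin\mathcal K_M$ costs at most $\sup\Phi-M$, which can be made as negative as desired. The set $\mathcal K_M$ itself we cover by finitely many Lévy balls $B_L(\tilde f_i,\delta_i)$. The radii are chosen so that, via \eqref{eq:metric comparaison}, the composite map $\tilde f\mapsto\Phi(x+f^+-f^-)$ varies by at most $\eta$ on each ball, and so that the local upper bound \eqref{eq:lower ldp for app pr} gives $\limsup\varepsilon\log\mathbb{P}(\overline X^\varepsilon\in B_L(\tilde f_i,\delta_i))\le -J(\tilde f_i)+\eta$. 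The usual "max over finitely many terms" argument then gives
$\limsup \le\max_i\{\Phi(x+f_i^+-f_i^-)-J(\tilde f_i)\}+2\eta\le \sup_{\tilde f}\{\Phi(x+f^+-f^-)-J(\tilde f)\}+2\eta.$

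The last step is to identify $\sup_{\tilde f}\{\Phi(x+f^+-f^-)-J(\tilde f)\}$ with the supremum in \eqref{eq:variationnal formulation}. This requires a contraction identity $\inf\{J(\tilde f):f^+-f^-=g\}=J_t(g)$, which I expect to take from \cite{M}, where the rate $J$ on $\widetilde{\mathbb V}_T$ is constructed. The only nontrivial point is that finiteness of $J$ forces absolute continuity, so that the sup is effectively over $AC$ paths. Letting $\eta\to0$ and $M\to\infty$ then concludes. I expect the delicate points to be this identification of rate functions and the uniformity of the $\rho_B$-continuity of $\Phi$ used when swapping $X^\varepsilon$ for $\overline X^\varepsilon$.
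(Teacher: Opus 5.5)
Your outline follows the paper, but the step where you swap $X^\varepsilon$ for $\overline X^\varepsilon$ inside the functional does not hold as you set it up. Your lower bound is the paper's. Your upper bound uses the same ingredients: exponential approximation by $\overline X^\varepsilon$, the L\'evy-compact sets $\mathcal K_M$ and the local bound of Proposition 1.3, a finite cover with Lemma 1.2.15 of \cite{ZD}, and the contraction from \cite{M}.

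The real difference is where you pass between the two processes. The paper transfers the local bound and the exponential tightness from $\overline X^\varepsilon$ to $\widetilde X^\varepsilon=(X^\varepsilon_+,X^\varepsilon_-)$ at the level of probabilities. It then covers along $\widetilde X^\varepsilon$, so it only needs continuity of $\widetilde\Phi$ at the finitely many centres $\tilde f_i$. You instead replace $\Phi(x+X^\varepsilon)$ by $\Phi(x+\overline X^\varepsilon_+-\overline X^\varepsilon_-)$ on $\{\rho_B<\eta\}$. That requires a modulus of continuity of $\Phi$ that is uniform along the paths.

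No such modulus exists on $\rho_B$-bounded sets. In $d=1$, let $y$ equal $1$ except for a very short excursion to $-1$ and back in every time window of length $\eta$. Let $z=-y$, both starting and ending at $0$. Both graphs lie within Hausdorff distance $O(\eta)$ of $[0,T]\times[-1,1]$ (plus the common tails). So $\rho_B(y,z)=O(\eta)$ and $\sup|y|=\sup|z|=1$. Yet $\Phi(y)-\Phi(z)$ approaches $\int_0^T\big(R(1,I_{T-s})-R(-1,I_{T-s})\big)\,ds$ as the excursions shrink, whatever $\eta$ is.

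The repair needs an ingredient you have not stated: control of total variation. If $\rho_B(y,z)<\eta$, then $|y_s-z_s|\le\eta+\operatorname{Var}_{[s-\eta,s+\eta]}(y)$, so $\int_0^T|y_s-z_s|\,ds\le(T+2\operatorname{Var}(y))\eta$. Together with $|y_T-z_T|\le\eta$ this gives $|\Phi(y)-\Phi(z)|\le\big(\Vert R\Vert_{\mathrm{Lip}}(T+2\operatorname{Var}(y))+L\big)\eta$. Now $\operatorname{Var}_{[0,t]}(X^\varepsilon)=\varepsilon\sum_{i\le N_{t/\varepsilon}}|Z_i|$, so $\mathbb{E}\,e^{\beta\operatorname{Var}(X^\varepsilon)/\varepsilon}=\exp\big(\tfrac t\varepsilon\int G(z)(e^{\beta|z|}-1)\,dz\big)<\infty$ for $\beta<1$. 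Discarding $\{\operatorname{Var}(X^\varepsilon)>V\}$ therefore costs at most $e^{(\overline Rt+B_1+C_\beta t-\beta V)/\varepsilon}$. Choose $V$ from $M$ first, then $\eta$, and your swap is justified.

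Alternatively, skip the swap. Cover $\mathcal K_M$ by balls $B_L(\tilde f_i,\delta_i/2)$, set $\delta_*=\min_i\delta_i/2$, and use
\[
\{\overline X^\varepsilon\in B_L(\tilde f_i,\delta_i/2)\}\cap\{\widetilde\rho_L(\overline X^\varepsilon,\widetilde X^\varepsilon)<\delta_*\}\subset\{\widetilde X^\varepsilon\in B_L(\tilde f_i,\delta_i)\}.
\]
On this event $\widetilde\Phi(\widetilde X^\varepsilon)\le\widetilde\Phi(\tilde f_i)+\eta$, and its probability is bounded by $\mathbb{P}(\overline X^\varepsilon\in B_L(\tilde f_i,\delta_i))$. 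This needs only continuity at the centres, as in the paper. It also avoids transferring exponential tightness from $\overline X^\varepsilon$ to $\widetilde X^\varepsilon$, which the paper only asserts.

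With either repair, the rest of your argument goes through, including the identification of the rate functions via \cite{M}. Two minor points:
\begin{itemize}
\item H\"older on the bad event is superfluous, since $\Phi_\varepsilon\le\overline Rt+B_1$.
\item Uniform convergence of $\Phi_{\varepsilon_k}$ on sup-bounded sets comes from the explicit form of $\Phi_\varepsilon$ (Lipschitz dependence on $I$ and local uniform convergence of $u^0_\varepsilon$), not from local uniform convergence alone.
\end{itemize}
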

This theorem provides a variational representation and reveals a surprising effect: the variational representation retains the classical form even though the equation involves an obstacle. This is explained by the fact that the Lagrangian is always finite, even though the Hamiltonian can be infinite. The variational representation encodes the obstacle through the fact that the Lagrangian is forced to be $
1$-Lipschitz, a property that follows from the Hamiltonian being finite only on the unit ball. This will appear in detail in the proof of the link between the variational representation and the Hamilton–Jacobi equation \eqref{eq:HJ3} in Section 5 below.
\begin{proof}
Let $T>0,$ and $y\in \mathbb{V}_T$ such that $y_0=x.$ The functional $\Phi$ is continuous at $y$ in $\mathbb{V}_T$ endowed with $\rho_B.$ Then, for  all $\delta>0$ there exists a neighborhood $V_y^{\delta}$ of $y$ on $(\mathbb{V}_T,\rho_B)$ such that 
 \begin{equation} 
      \Phi(y)-\delta \leq \inf_{z\in V^{\delta}_y}\Phi(z).
 \end{equation}
Moreover, the local uniform convergence of $\Phi_{\varepsilon_k}$ entails that
 \begin{equation} 
 \label{eq:ineq for the convergence}
     \Phi(y)-2\delta\leq \inf_{z\in V^\delta_y}\Phi_{\varepsilon_k}(z).
 \end{equation}
 Then
 
\begin{align*}
\varepsilon_k \log \mathbb{E}\left(
e^{\frac{1}{\varepsilon_k}\Phi_{\varepsilon_k}(X^{\varepsilon_k,x})}
\right)
&\geq
\varepsilon_k \log \mathbb{E}\left(
e^{\frac{1}{\varepsilon_k}\Phi_{\varepsilon_k}(X^{\varepsilon_k,x})}
\mathbf{1}_{\{X^{\varepsilon_k,x}\in V_y^\delta\}}
\right) \\
&\geq
\Phi(y)-2\delta
+
\varepsilon_k \log
\mathbb{P}\left(
X^{\varepsilon_k,x}\in V_y^\delta
\right).
\end{align*}
 
 From lower bound of the LDP stated in Theorem 4.1, we deduce that  \begin{equation}
     \liminf_{k\rightarrow +\infty}\varepsilon_k\log{\mathbb{E}\big(e^{\frac{1}{\varepsilon_k}\Phi_{\varepsilon_k}(X^{\varepsilon_k,x})}\big)}\geq \Phi(y)-2\delta-\inf_{z\in V^\delta_y}J_t(z)\geq \Phi(y)-2\delta-J(y).
 \end{equation}
 Then, by letting $\delta$ go to zero, we obtain  that 
 \begin{equation}
      \liminf_{k\rightarrow +\infty}\varepsilon_k\log{\mathbb{E}\big(e^{\frac{1}{\varepsilon_k}\Phi_{\varepsilon_k}(X^{\varepsilon_k,x})}\big)}\geq \sup_{y\in AC([0,t],\R^d), y_0=x}\{\Phi(y)-J_t(y)\}.
 \end{equation}
 Let us now prove the upper bound. To simplify the notations we denote the process $X^{\varepsilon_k,x}$ as $X^\varepsilon.$\\
 We start by defining the functional $\widetilde\Phi$ in $\widetilde{\mathbb{V}}_T$ by $\widetilde\Phi(\widetilde{y})=\widetilde\Phi((y^+,y^-))=\Phi(y_+-y_-).$
From \eqref{eq:metric comparaison} the functional $\widetilde\Phi$ is continuous in $\widetilde{\mathbb{V}}_T$ endowed with the Lévy metric $\widetilde{\rho}_L.$ 
Fix $\tilde f\in\widetilde{\mathbb{V}}_T$ such that $f_0=x$. Let $\eta>0.$
 Since $\widetilde{\Phi}_\varepsilon$ converges  locally uniformly, and $\widetilde{\Phi}$ is continuous, then there exists
$\delta_1=\delta_1(\tilde f,\eta)>0$ such that for all $\tilde g\in B_L({\tilde f},\delta_1)$ we have 
\begin{equation}
\big|\widetilde{\Phi}_\varepsilon(\tilde g)-\widetilde{\Phi}_\varepsilon(\tilde f)\big|<\eta.
\end{equation}
Moreover, by combining \eqref{eq:lower ldp for app pr} with \eqref{eq:expo app} we get that
$$\displaystyle\lim_{\delta\downarrow0}\limsup_{\varepsilon \rightarrow 0}\log
P(\widetilde{X}^\varepsilon \in B_L({\tilde f},{\delta}))\le-J(\tilde f).$$
Thus, there exists $\delta_0=\delta_0(\tilde f,\eta)>0$ such that, for every
$\delta\in(0,\delta_0]$,
\begin{equation}
\label{eq:proba continuity}
\limsup_{\varepsilon \to 0}
\varepsilon \log P\big(\widetilde{X}^\varepsilon \in B_L(\tilde f, \delta)\big)
\le -J(\tilde f) + \eta.
\end{equation}
Set
$
\delta(\tilde f):=\min\{\delta_0(\tilde f,\eta),\,\delta_1(\tilde f,\eta)\}>0.
$
From \eqref{eq:expontighntess apro} and \eqref{eq:expo app}, we deduce that for all $M$ there exists a compact $\mathcal K_M \subset  \widetilde{\mathbb{V}}_T$ endowed with $\widetilde{\rho}_L$ such that 
\begin{equation}
\label{eq:expontighntess}
    \limsup_{\varepsilon\to 0}\varepsilon\log{\mathbb{P}(\widetilde{X}^\varepsilon\in \mathcal K_M^c)}\leq -M.
\end{equation}
The family $\left( B_L(\tilde f, \delta(\tilde f)) \right)_{\tilde f \in \mathcal K_M}$ is an open
cover of $\mathcal K_M$ with respect to $\widetilde{\rho}_L$. Since $\mathcal K_M$ is compact, there is a
finite subcover:
\begin{equation}
\mathcal K_M\ \subset\ \bigcup_{i=1}^{m}B_L({\tilde f_i},\delta_i),
\qquad \delta_i:=\delta(\tilde f_i),\ i=1,\dots,m,
\end{equation}
where $m\in \mathbb{N}^*$. Therefore, for each $i,$ we have
\begin{align}
\mathbb E\Big[\exp\big(\frac{1}{\varepsilon}\widetilde{\Phi}(\widetilde{X}^\varepsilon)\big)\,
\mathbf 1_{ B_L(\tilde{f}_i, \delta_i)}(\widetilde{X}^\varepsilon)\Big]
\ \le\ \exp\big(\frac{1}{\varepsilon}(\widetilde{\Phi}(\tilde f_i)+\eta)\big)\cdot
P\big(\widetilde{X}^\varepsilon\in  B_L(\tilde{f}, \delta)\big).
\end{align}
Using \eqref{eq:proba continuity} we conclude that 
\begin{equation}
    \limsup_{\varepsilon\rightarrow 0}\varepsilon\log{\mathbb{E}\Big[\exp\big(\frac{1}{\varepsilon}\widetilde{\Phi}_\varepsilon(\widetilde{X}^\varepsilon)\big)\,
\mathbf 1_{ B_L(\tilde{f}_i, \delta_i)}\Big]}\leq \widetilde{\Phi}(\tilde{f}_i)+2\eta -J_t(\tilde{f_i}),
\end{equation}
From Lemma 1.2.15 in \cite{ZD}, we obtain that
\begin{align*}
\limsup_{\varepsilon \to\infty}\varepsilon \log
\mathbb E\big[\exp(\frac{1}{\varepsilon}\widetilde{\Phi}(\widetilde{X}^\varepsilon))\,\mathbf 1_{\mathcal K_M}(\widetilde{X}^\varepsilon)\big]
&\ \le\ \max_{1\le i\le m}\big\{\widetilde{\Phi}(\tilde f_i)-J_t(\tilde f_i)\big\}+2\eta\\
&\ \le\ \sup_{\tilde f\in\widetilde{\mathbb{V}}_t,f_0=x}\{\widetilde{\Phi}(\tilde f)-J_t(\tilde f)\}+2\eta.
\end{align*}
On the other hand, using \ref{item:23} and \ref{item:53},  we get that
\begin{equation}
    \limsup_{\varepsilon \to\infty}\varepsilon \log
\mathbb E\big[\exp(\frac{1}{\varepsilon}\widetilde{\Phi}(\widetilde{X}^\varepsilon))\,\mathbf 1_{\mathcal K_M^c}(\widetilde{X}^\varepsilon)\big]\leq \overline{R}t+B_1-M.
\end{equation}
Finally, we deduce that 

\begin{equation}
\limsup_{\varepsilon \to\infty}\varepsilon \log
\mathbb E\big[\exp(\frac{1}{\varepsilon}\widetilde{
\Phi}(\widetilde{X}^\varepsilon))\,\mathbf )\big]
\ \le\ \max\{\sup_{\tilde f\in\widetilde{\mathbb{V}}_t,f_0=x}\{\widetilde{\Phi}(\tilde f)-J_t(\tilde f)\}+2\eta,\overline{R}t+B_1-M\}.
\end{equation}
By letting $M$ go to infinity and $\eta$ to zero we deduce the upper bound. Moreover, Corollary 3.2 in \cite{M}, implies that 
\begin{equation}
    \limsup_{\varepsilon \to\infty}\varepsilon \log
\mathbb E\big[\exp(\frac{1}{\varepsilon}\Phi(\widetilde{X}^\varepsilon)\,\mathbf )\big]
\ \le\ \sup_{ f\in\mathbb{V}_t, f_0=x}\{\Phi(f)-J_t(f)\}.
\end{equation}
\end{proof}

\section{Variational representation of the viscosity solution}
    
\begin{thm}
 Under Assumptions \ref{item:13}-\ref{item:73},the function $u$ given by \eqref{eq:variationnal formulation} is a viscosity  solution to the constrained Hamilton-Jacobi equation with obstacle
 \eqref{eq:HJ3}.
 \end{thm}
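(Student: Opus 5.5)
The plan is to work directly from the variational formula \eqref{eq:variationnal formulation} and first extract the structural properties of the Lagrangian $L$. Since $H(p)=\int G(z)(e^{p\cdot z}-1)dz$ is finite exactly on the open unit ball $\{|p|<1\}$ (by \eqref{kernel conditions}), is convex, smooth there, and $H(p)\to+\infty$ as $|p|\to1$ (using $\min f>0$), the Legendre transform $L$ is finite everywhere and convex. It also satisfies $|q|-C\le L(q)\le |q|+C'$. More precisely, $L(q)=|q|\,(1+o(1))$ as $|q|\to\infty$, so $L$ is asymptotically $1$-Lipschitz. The dual identity $H=L^*$ holds on $\R^d$, with $H=+\infty$ outside $\overline{B_1}$.

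The second step is to establish the dynamic programming principle. For $0\le s<t$,
\[
u(t,x)=\sup_{y\in AC([0,t-s]),\,y_0=x}\Big\{u(s,y_{t-s})+\int_0^{t-s}R(y_\tau,I_{t-\tau})d\tau-\int_0^{t-s}L(y'_\tau)d\tau\Big\}.
\]
This follows by splitting and concatenating paths, since $I$ enters only through $I_{t-\tau}$. From the representation, Assumptions \ref{item:53}--\ref{item:63} and the bound on $R$, I would derive two regularity properties: $u(t,\cdot)$ is Lipschitz (shift the whole path), and $u$ is locally Lipschitz in time.

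The key extra fact is the gradient constraint $|\nabla u|\le1$ in the viscosity sense. Concatenating a short jump segment of speed $q$ over a time $h\to0$ gives $u(t,x+z)\ge u(t,x)-|z|$ up to errors $o(1)$. Letting the duration go to zero while keeping the displacement $z$ fixed costs $\int L\approx h\,L(z/h)\to|z|$, by the asymptotics of $L$. Hence $u(t,\cdot)$ is $1$-Lipschitz, which gives the obstacle part $1-|\nabla u|\ge0$ for subsolutions and supersolutions alike.

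The equation itself, in the sense of \cite{Ishi,LP}, is handled in two parts. \emph{Supersolution:} take a test function $\phi$ touching $u$ from below at $(t,x)$. If $|\nabla\phi|\ge1$ the min is already $\le$ the obstacle term, which is zero, so there is nothing to prove beyond the constraint. If $|\nabla\phi|<1$, use the DPP with the constant velocity path $y'=q$ on $[0,h]$. Dividing by $h$ gives $\partial_t\phi-R-(q\cdot\nabla\phi-L(q))\ge0$, and taking the sup over $q$ yields $\partial_t\phi-R-H(\nabla\phi)\ge0$.

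\emph{Subsolution:} take $\phi$ touching from above. If $|\nabla\phi(t,x)|<1$, the Lipschitz bound lets us restrict near-optimal paths so that $h^{-1}\int_0^h L(y')$ controls the displacement. Jensen together with $q\cdot p-L(q)\le H(p)$ then gives $\partial_t\phi-R-H(\nabla\phi)\le0$. If $|\nabla\phi|=1$, the obstacle term is zero and the min is $\le0$. The case $|\nabla\phi|>1$ is impossible because $u$ is $1$-Lipschitz.

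The time dependence of $R(\cdot,I_{t-s})$ through a merely BV function $I$ is handled by the $L^1$-in-time viscosity notion of \cite{Ishi,LP}. In the limits, $\int_0^h R(y_\tau,I_{t-\tau})d\tau$ is compared with an integrated test-function condition instead of a pointwise value. The constraint $\max_x u(t,x)=0$ is obtained by passing to the limit in $\int n_\varepsilon=I_\varepsilon\in[I_m/2,2I_M]$ via Theorem 4.3. The upper bound uses the uniform concavity from \ref{item:53} to prevent mass from escaping, and the lower bound uses $1$-Lipschitz continuity to turn a positive maximum into a mass blow-up.

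I expect the main obstacle to be the subsolution inequality at points where $|\nabla\phi|$ is close to $1$. There $H$ blows up, and near-optimal paths may have unbounded velocities concentrating as $h\to0$, so the Jensen/Legendre step cannot be done uniformly. I would first try to show that near-optimal velocities are a.e.\ bounded by a constant depending on $1-|\nabla\phi|$, using superlinearity of $L(q)-p\cdot q$ for $|p|<1$, which holds since $L(q)-p\cdot q\ge(1-|p|)|q|-C$.
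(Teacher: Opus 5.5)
Your overall structure matches the paper's: DPP, a spatial $1$-Lipschitz bound, and constant-velocity paths with $L^*=H$ for the supersolution. The paper leaves the subsolution to ``similar arguments'', so you go further there. However, several steps fail as written.

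\textbf{The bound $L(q)\ge|q|-C$ is false.} Since $H\ge0$ and $H(p)\to+\infty$ as $|p|\uparrow1$, we have $|q|-L(q)=\inf_{|p|<1}\{|q|-p\cdot q+H(p)\}\to+\infty$. For instance, with $d=1$ and $G=\tfrac12e^{-|z|}$, one gets $H(p)=p^2/(1-p^2)$ and $L(q)=|q|-\sqrt{2|q|}+O(1)$. What does hold is $(1-\delta)|q|-C_\delta\le L(q)\le|q|$ for every $\delta>0$. Moreover $L$ is globally $1$-Lipschitz, because its subgradients at $q$ are maximizers of $p\cdot q-H(p)$ and therefore lie in $\{|p|<1\}$.

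This has three consequences.

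\emph{Velocity bounds.} $L(q)-p\cdot q$ is only linearly coercive, with rate $1-|p|-\delta$. It controls $\int|y'|$ but never $\|y'\|_\infty$, so your planned velocity bound for the subsolution cannot be obtained.

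\emph{Localization.} The $1$-Lipschitz bound on $u$ cannot localize near-optimal paths either. The possible gain $|z|$ from a displacement $z$ in time $h$ is never smaller than its cost $hL(z/h)$. What you actually need is localization of positions only. Fix $r$ such that $|\nabla\phi|\le\rho<1$ on $[t-r,t]\times B(x,r)$. Suppose an $\eta h$-optimal path leaves the ball at a first time $\tau_1\le h$. The DPP on $[0,\tau_1]$ and the local maximum of $u-\phi$ give $\int_0^{\tau_1}\bigl(L(y')-\nabla\phi\cdot y'\bigr)d\tau\le Ch$. The left side, however, is at least $\tfrac{1-\rho}{2}r-Ch$. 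So for small $h$ the path stays in the ball. Then the pointwise inequality $\nabla\phi\cdot q-L(q)\le H(\nabla\phi)$, with $H$ continuous on $\{|p|\le\rho\}$, concludes without any velocity bound.

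\emph{Time regularity.} Your claim that $u$ is locally Lipschitz in time ``from the representation and the data'' fails. The DPP bound on $u(t,x)-u(t-h,x)$ involves $\sup_z\{K|z|-hL(z/h)\}=+\infty$ for any spatial Lipschitz constant $K\ge1$. As sketched, your proof that $u(t,\cdot)$ is $1$-Lipschitz goes through $u(t-h,x)\to u(t,x)$, so it is circular. You can repair it in either of two ways:
\begin{itemize}
\item argue as the paper does: perturb a near-optimal path linearly on a short final interval and use that $L$ is $1$-Lipschitz;
\item or fix a near-optimal path $y$ first, prepend the jump, and shift $y$ by $h$. The error vanishes by absolute continuity of $y$ and the estimate $\int_0^t|I_{r+h}-I_r|\,dr\le h\,\mathrm{TV}(I)$.
\end{itemize}
Time continuity then follows from the $1$-Lipschitz bound and the fact that $u$ is bounded above. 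The resulting modulus is $h\,\omega(C/h)$, with $\omega(r)=\sup_{|q|\le r}(|q|-L(q))=o(r)$. This is not a Lipschitz bound in general. With $R\equiv0$ and $u^0=-|x|$ in the example above, $u=-tL(x/t)$ and $\partial_tu\approx\sqrt{|x|/(2t)}$. The paper's deduction of bounded optimal velocities from $L(p)\ge C_1|p|-C_2$ has the same defect.

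\textbf{Supersolution at $|\nabla\phi|=1$.} Here your inequality runs the wrong way. A supersolution needs the minimum to be $\ge0$, and at $|\nabla\phi(t,x)|=1$ its first entry is $-\infty$. So this case must be ruled out, not dismissed. Your own constant-velocity argument does this if you apply it to every $\phi$ touching from below, with no case split. It yields $\partial_t\phi-R\ge\sup_q\{q\cdot\nabla\phi-L(q)\}=H(\nabla\phi)$, using $L^*=H$ ($H$ is convex, and l.s.c.\ by Fatou). This is impossible when $|\nabla\phi|\ge1$. Thus the gradient constraint for supersolutions comes directly from the DPP. For subsolutions, the case $|\nabla\phi|\ge1$ is trivial.
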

Before proving this theorem, we establish a dynamic programming principle that will be useful in the proof. 
 \begin{pro}
     The variational representation in \eqref{eq:variationnal formulation} satisfies a  dynamic programming principle formula, (i,e), for all $t>s\geq 0,$ and $x\in \mathbb{R}^d,$ we have that 
     \begin{equation}
     \label{eq:DPP}
         u(t,x)=\sup\limits_{y\in AC([0,t],\mathbb{R}^d),y_t=x}\{ u(s,y_{s})+\int_{s}^{t}R(y_\tau,I_\tau)d\tau-\int_{s}^{t}L(-y'_\tau) d\tau\}.
     \end{equation}
 \end{pro}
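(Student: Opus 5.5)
The plan is first to rewrite \eqref{eq:variationnal formulation} in backward form, and then to split and concatenate admissible curves at the intermediate time $s$. Given a curve $y\in AC([0,t],\mathbb{R}^d)$ with $y_0=x$, set $\gamma_\tau:=y_{t-\tau}$. Then $\gamma_t=x$, $\gamma'_\tau=-y'_{t-\tau}$, and by the changes of variables $\tau\mapsto t-\tau$,
\[
u^0(y_t)+\int_0^tR(y_\sigma,I_{t-\sigma})d\sigma-\int_0^tL(y'_\sigma)d\sigma
= u^0(\gamma_0)+\int_0^tR(\gamma_\tau,I_\tau)d\tau-\int_0^tL(-\gamma'_\tau)d\tau .
\]
Hence
\[
u(t,x)=\sup_{\gamma\in AC([0,t],\mathbb{R}^d),\,\gamma_t=x}\Big\{u^0(\gamma_0)+\int_0^tR(\gamma_\tau,I_\tau)d\tau-\int_0^tL(-\gamma'_\tau)d\tau\Big\}.
\]
The same rewriting at time $s$ gives
\[
u(s,z)=\sup_{\gamma\in AC([0,s]),\,\gamma_s=z}\Big\{u^0(\gamma_0)+\int_0^sR(\gamma_\tau,I_\tau)d\tau-\int_0^sL(-\gamma'_\tau)d\tau\Big\}.
\]
The time arguments of $I$ now match across the two intervals $[0,s]$ and $[s,t]$, and this is why \eqref{eq:DPP} has $I_\tau$ and $L(-y')$.

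For the inequality $u(t,x)\le$ RHS, take any admissible $\gamma$ on $[0,t]$ with $\gamma_t=x$. Split the integrals at $s$. The part on $[0,s]$ is at most $u(s,\gamma_s)$, because the restriction $\gamma|_{[0,s]}$ is absolutely continuous and ends at $\gamma_s$, so it is admissible for $u(s,\gamma_s)$. What remains is exactly the functional in \eqref{eq:DPP} evaluated at $y=\gamma$. Taking the supremum over $\gamma$ gives the bound.

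For the reverse inequality, fix $y\in AC([0,t])$ with $y_t=x$ and $\eta>0$. If $u(s,y_s)=-\infty$ or $\int_s^tL(-y')=+\infty$ there is nothing to prove. I expect $u(s,\cdot)$ to be finite anyway: $L\ge -H(0)=0$ and $L(0)=0$, so a constant path gives a lower bound, and $R$ bounded by $\overline R$ together with Assumption \ref{item:53} gives an upper bound. Choose $\gamma$ on $[0,s]$ with $\gamma_s=y_s$ that is $\eta$-optimal for $u(s,y_s)$. Concatenate $\gamma$ on $[0,s]$ with $y$ on $[s,t]$. The concatenation is absolutely continuous because the two pieces agree at $s$, and it ends at $x$. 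Its value is at least $u(s,y_s)-\eta+\int_s^tR(y_\tau,I_\tau)d\tau-\int_s^tL(-y'_\tau)d\tau$. Letting $\eta\to0$ and taking the supremum over $y$ gives $u(t,x)\ge$ RHS.

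The only delicate points are bookkeeping. The first is justifying the time reversal, in particular that $I$ is bounded and measurable as a BV function, so all integrals are well defined. The second is handling possibly infinite values of $\int L$, which is harmless since $L\ge0$ makes these integrals well defined in $[0,+\infty]$. I expect the main care to go into stating precisely which curves are admissible: the curve in \eqref{eq:DPP} is indexed on $[0,t]$, but only its restriction to $[s,t]$ matters, and its values on $[0,s)$ are irrelevant.
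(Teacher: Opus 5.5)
Your proposal is correct and follows the paper's route. The paper likewise rewrites \eqref{eq:variationnal formulation} in backward form, as its \eqref{eq:newrepresentation dpp}; you derive this by an explicit time reversal, whereas the paper only asserts it. It then restricts an admissible curve to $[0,s]$ to get $u(t,x)\le$ RHS, and concatenates an $\eta$-optimal curve for $u(s,y_s)$ with $y|_{[s,t]}$ for the reverse inequality. Your concatenation order, with the optimal piece on $[0,s]$ and the given $y$ on $[s,t]$, is the right one; the paper's displayed definition of $z$ swaps the two pieces, which is just a typo.
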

\begin{proof}
The proof follows classical arguments in dynamic programming principle of optimal control (see, for instance, \cite{BD,cannarsa2004}).\\
Let $t>s>0,$  $x\in \mathbb{R}^d.$ It is clear from   \eqref{eq:variationnal formulation} that

\begin{equation}
\label{eq:newrepresentation dpp}
     u(t,x)=\sup\limits_{y \in AC([0,t],\mathbb{R}^d),y_t=x}\{u^0(y_0)+\int_{0}^{t}R(y_\tau,I_{\tau})d\tau-\int_{0}^{t}L(-y'_\tau)d\tau\}.
\end{equation}
Let $y\in AC([0,t],\mathbb{R}^d)$ such that $y_t=x.$  We have for all $z\in AC([0,s],\R^d)$ such that $z_s=y_{s},$ that

$$u(s,y_{s})\geq u^0(z_0)+\int_{0}^{s} R(z_\tau,I_\tau)d\tau-\int_{0}^{s}L(-z'_\tau) d\tau.$$
We take 
$z_\tau=y_{\tau}, \forall \tau\in[0,s]$. Thus 
$$u(s,y_{s})+ \int_{s}^{t} R(y_\tau,I_\tau)d\tau-\int_{s}^{t}L(-y'_\tau) d\tau\geq u^0(y_0)+\int_{0}^{t} R(y_\tau,I_\tau)d\tau-\int_{0}^{t}L(-y'_\tau) d\tau.$$
 Therefore, by taking the supremum overall $y\in AC([0,t],\R)$ such that $y_t=x$, we get that 
  $$ \sup\limits_{y\in AC([0,t],\mathbb{R}^d), y_0=x}\{ u(s,y_{s})+\int_{s}^{t}R(y_\tau,I_\tau)-\int_{s}^{t}L(-y'_\tau) d\tau\}\geq u(t,x).$$
  Let us now prove the converse inequality: let $y\in AC([0,t],\mathbb{R}^d),$ such that $y_t=x.$ Let $\eta>0,$ from \eqref{eq:newrepresentation dpp} there exist $y_\eta\in AC([0,s],\mathbb{R}^d), $ such that $y_\eta(s)=y_{s},$ and
  \begin{equation}
  \label{eq:a}
    u(s,y_{s})\leq u^0(y_\eta(0))+\int_{0}^{s} R(y_\eta(\tau),I_\tau)d\tau-\int_{0}^{s}L(-y'_\eta(\tau))d\tau+\eta.
    \end{equation}
  We define $$z_\tau=\begin{cases}
   y_\tau,~~~~~~~\tau \in [0,s],\\
      y_\eta(\tau),~\tau \in [s,t].
  \end{cases}$$
  From \eqref{eq:newrepresentation dpp} and \eqref{eq:a}, we have 
  \begin{align*}
      u(t,x)&\geq u^0(z_0)+\int_{0}^{t}R(z_\tau,I_\tau)-\int_{0}^{t}L(-z'_\tau) d\tau\\
      &\geq u(s,y_{s})+\int_{s}^{t}R(y_\tau,I_\tau)-\int_{s}^{t}L(-y'_\tau) d\tau-\eta.
  \end{align*}
  By taking the supremum, and letting $\eta$ go to zero, we conclude the proof. 
\end{proof}
\subsection{Proof of Theorem 5.1}
We start by proving the following lemma 
\begin{lem}
  Under Assumptions \ref{item:13}-\ref{item:73}, The function $u$ is Lipschitz-continuous in time and space, and is $1$-Lipschitz with respect to the spatial variable.
  \end{lem}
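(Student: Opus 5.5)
The plan is to read everything off the variational formula \eqref{eq:variationnal formulation}, using three elementary properties of the Lagrangian $L$. First, by \eqref{kernel conditions}, $H(p)=\int_{\R^d}G(z)(e^{p\cdot z}-1)\,dz$ is finite exactly on the open ball $\{|p|<1\}$ and $+\infty$ outside it. So $L(q)=\sup_{|p|<1}\{p\cdot q-H(p)\}$ is a supremum of $1$-Lipschitz affine functions, hence \emph{$L$ is $1$-Lipschitz}. Second, $H$ is convex and even (since $f(x)=f(-x)$) with $H(0)=0$, so $H\ge 0$, $L(0)=0$ and $L\ge 0$. Third, for every fixed $r\in(0,1)$ we have $L(q)\ge r|q|-C_r$, where $C_r:=\sup_{|p|\le r}H(p)<\infty$. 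In addition I use $|R|\le\overline{R}$ and the Lipschitz bound on $R$ from Assumption \ref{item:23}, and the bounds $u^0\le B_1$ and $u^0$ $L$-Lipschitz from Assumptions \ref{item:53}, \ref{item:63}, \ref{item:73}. Together these show that $u$ is finite, with $u^0(x)-\overline{R}t\le u(t,x)\le B_1+\overline{R}t$ (take the constant path for the lower bound).

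\emph{Spatial $1$-Lipschitz bound.} Fix $t>0$, points $x,x'$, and an admissible path $y$ with $y_0=x$. For $h\in(0,t)$ define
\[
z_s=y_s+(x'-x)\Big(1-\frac{s}{h}\Big)^+ .
\]
Then $z_0=x'$ and $z_t=y_t$, so the $u^0$ term is unchanged. On $[0,h]$ we have $z'_s=y'_s-(x'-x)/h$, so the $1$-Lipschitz property of $L$ gives $\int_0^t L(z'_s)\,ds\le\int_0^t L(y'_s)\,ds+|x-x'|$. The $R$ term changes by at most $\|R\|_{\mathrm{Lip}}\,|x-x'|\,h$. Taking the supremum over $y$ gives $u(t,x')\ge u(t,x)-|x-x'|-\|R\|_{\mathrm{Lip}}|x-x'|h$. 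Letting $h\to 0$ and exchanging $x$ and $x'$ yields $|u(t,x)-u(t,x')|\le|x-x'|$.

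\emph{Time regularity.} The lower bound is easy. Given $y$ admissible for $u(t,x)$, use the path that stays at $x$ on $[0,h]$ and then follows $y_{s-h}$. Because the integrand is $R(z_s,I_{t+h-s})$, the time shift matches the $I$ argument exactly on $[h,t+h]$, and $L(0)=0$. Hence $u(t+h,x)\ge u(t,x)-\overline{R}h$.

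For the upper bound, take an $\eta$-optimal $y$ for $u(t+h,x)$ and apply the dynamic programming principle (Proposition 5.2, written forward) at time $h$. This gives
\[
u(t+h,x)\le u(t,y_h)+\overline{R}h-\int_0^h L(y'_s)\,ds+\eta\le u(t,x)+|y_h-x|-\int_0^hL(y'_s)\,ds+\overline{R}h+\eta ,
\]
where the second inequality uses the spatial bound just proved. The remaining task is to control $|y_h-x|-\int_0^h L(y'_s)\,ds$ by $Ch$.

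This is the step I expect to be the main obstacle. The quantity $|q|-L(q)$ is not bounded: in $d=1$ one computes $L(q)\approx|q|-C\sqrt{|q|}$ for large $|q|$. My first attempt is an a priori bound on near-optimal paths. From the two-sided bound on $u$, $\int_0^{t+h}L(y')\le C_t$. Combined with $L(q)\ge r|q|-C_r$, this bounds $\int|y'|$, so $|y_h-x|\le C$. Then, writing $|y_h-x|-\int_0^h L\le(1-r)|y_h-x|+C_rh$ and optimizing over $r$ (with $C_r\sim c/(1-r)$), I get only a $\sqrt h$ modulus.

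To reach a genuine Lipschitz bound I would try to exploit that the penalty in $L$ is almost linear, with slope exactly $1$, which is the slope of the spatial Lipschitz bound. The idea is to replace $y$ on $[0,h]$ by a slower reparametrization: reach $y_h$ at time $h$ by following $y$ on a rescaled interval, and estimate the gain in $\int L$ via convexity of $L$ and $L(0)=0$, i.e.\ $L(\lambda q)\le\lambda L(q)$ for $\lambda\in[0,1]$. If that fails, I would use the Lipschitz continuity of $u^0$ and propagate a uniform bound $\|\nabla u\|\le 1$ backward through the dynamic programming principle.
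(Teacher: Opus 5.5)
Your treatment of the Lagrangian is correct: $L$ is $1$-Lipschitz as a supremum of affine maps with slopes in the open unit ball, and $L(0)=0$, $L\ge0$, $L(q)\ge r|q|-C_r$. So are the spatial $1$-Lipschitz estimate and the one-sided bound $u(t+h,x)\ge u(t,x)-\overline{R}h$. These agree in substance with the paper. The paper perturbs a near-optimal path linearly on a short time interval next to $x$, using Proposition 5.2, and extends paths by a constant piece. Your proof that $L$ is $1$-Lipschitz is shorter than the paper's subdifferential argument.

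The gap is the upper time bound. You only reach a H\"older modulus, and neither fallback can improve it. Any comparison of $u(t+h,\cdot)$ with $u(t,\cdot)$ along a near-optimal path costs an amount controlled by $|q|-L(q)$ or by $qL'(q)-L(q)=H(L'(q))$ at the path's velocities. Both are unbounded as $|q|\to\infty$. A $1$-Lipschitz bound on $u(t,\cdot)$ cannot help either, since $\sup_q(|q|-L(q))=+\infty$.

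In fact your $\sqrt h$ rate is sharp, so no uniform time-Lipschitz bound exists. Take $d=1$, $G(z)=\tfrac12 e^{-|z|}$, $R(x,I)=1-I$ (so $I_m=I_M=1$) and $u^0_\varepsilon(x)=-|x|+\varepsilon\log\frac{1}{2\varepsilon}$. Then $I_\varepsilon(0)=1$, all assumptions hold, $I_\varepsilon\equiv1$, and $u^0=-|x|$. Here $H(p)=p^2/(1-p^2)$ on $(-1,1)$ and $L(q)=|q|-\sqrt{2|q|}+O(1)$. The $R$-term vanishes and Jensen reduces the formula to Hopf--Lax form. Since $L$ is even, nondecreasing on $[0,\infty)$, with $L'<1$ and $L(q)\le|q|$, the optimal endpoint is $0$:
\[
u(t,x)=-t\,L\big(|x|/t\big)=-|x|+\sqrt{2|x|t}+O(t).
\]
Thus $u(t,x)-u^0(x)\sim\sqrt{2|x|t}$ for $x\neq0$, and $\partial_t u(t,x)=H\big(L'(|x|/t)\big)\sim\sqrt{|x|/(2t)}$ blows up both as $t\to0$ and as $|x|\to\infty$.

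The paper's proof of this step claims that, because $L(p)\ge C_1|p|-C_2$, the supremum may be restricted to paths with $\|y'\|_{L^\infty}\le M$. A merely linear lower bound gives no such control; superlinear growth would be needed. In the example the optimal path is the segment from $x$ to $0$, with speed $|x|/t$. So the paper has exactly the hole you flagged, and the time-Lipschitz part of the lemma is false as a uniform statement. What this line of argument actually yields is the spatial $1$-Lipschitz bound, your one-sided estimate, and a local H\"older modulus in time. Lipschitz continuity in time can at best hold locally in $(0,\infty)\times\R^d$, and proving it would need a different argument.
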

  \begin{proof}
      Let us first proof that $u$ is 1-Lipschitz w.s.t space. The proof of this propriety relies on the fact that $L$ is $1$-Lipschitz, i,e

      \begin{equation}
      \label{eq:Lipschitz lagrangien}
          L(p)-L(q)\leq \vert p-q\vert,~~\forall p,q\in \R^d.
      \end{equation}
      The proof of this point will be given after the proof of this Lemma.\\
Let $t>s>0,$  $x,y\in \R^d$ and $\eta>0.$ From \eqref{eq:DPP} there exists $z^\eta\in AC([0,t],\R^d)$ such that $z^\eta_t=x,$ and \begin{equation}
       \label{eq:updpp}
           u(t,x)\leq  u(s,z^\eta_s)+\int_{s}^{t}R(z^\eta_\tau,I_\tau)d\tau-\int_{s}^{t}L(-(z^\eta_\tau)'))d\tau+\eta.
       \end{equation}
      Moreover, for all $z\in AC([0,t],\R^d),$ such that $z_t=x',$ we have
      \begin{equation}
      \label{eq:lowerdpp}
          u(t,x')\geq u(s,z_s)+\int_{s}^{t} R(z_\tau,I_\tau)d\tau-\int_{s}^{t}L(-z'_\tau)d\tau.
      \end{equation}
We define $\widetilde{z}^\eta_\tau=z^\eta_\tau+\frac{\tau-s}{t-s}(x-x'),~\forall \tau\in [s,t].$ By \eqref{eq:updpp} and \eqref{eq:lowerdpp} we obtain that

\begin{align*}
    u(t,x)-u(t,x')&\leq \int_{s}^{t}(R(z^\eta_\tau,I_\tau)-R(\widetilde{z}^\eta_\tau,I_\tau))d\tau+ \int_{s}^{t} L(-(z^\eta_\tau)')-L\big(-(z^\eta_\tau)'+\frac{x-x'}{t-s}\big)d\tau+\eta\\
    &\leq \Vert R\Vert_{Lip} \int_{s}^{t}\frac{\tau-s}{t-s}\vert x-x'\vert d\tau+\int_{s}^{t}\frac{\vert x-x'\vert}{t-s}d\tau+\eta\\
    &\leq \Vert R\Vert_{Lip} \frac{(t-s)}{2}\vert x-x'\vert +\vert x-x'\vert+\eta.
\end{align*}
By letting $\eta$  go to zero and $s$ to $t$ we conclude the proof.\\

Let us now prove \eqref{eq:Lipschitz lagrangien}. Note that  $H(p)=+\infty$ for all $\vert p\vert \geq 1,$ and $H$ is positive then for all $p\in \R^d$ we have 
\begin{equation}
    L(p)=\sup_{q\in \R^d}\{p.q-H(q)\}=\sup_{q\in B(0,1)}\{p.q-H(q)\}\leq \vert p\vert.
\end{equation}
Furthermore, by the convexity of $L$ there exists $\xi\in \partial L(p)$ such that 
\begin{equation}
\label{eq:subdiff}
    L(q)\geq L(p)+\xi.(q-p),~~\forall q\in \R^d.
\end{equation}
We take $q=p+\lambda.v$ where $v\in \R^d$ and $\lambda \in \R.$ Then \eqref{eq:subdiff} implies that 
\begin{align*}
    \xi.v\leq\frac{L(p+\lambda v)-L(P)}{\lambda}\leq \frac{\vert p+\lambda v\vert-L(p) }{\lambda}.
\end{align*}
We let $\lambda$ go to infinity we obtain that 
\begin{equation*}
    \xi.v\leq \vert v\vert, ~~\forall v\in \R^d.
\end{equation*}
    Therefore, 
    \begin{equation}
        \vert \xi \vert\leq 1.
    \end{equation}
Finally, we conclude from \eqref{eq:subdiff} that 
\begin{equation}
    L(q)-L(p)\geq -\vert p-q\vert,
\end{equation}
which ensures that $L$ is $1$-Lipschitz.\\

We prove now that
\[
|u(t,x) - u(t',x)| \leq C |t - t'|.
\]
The argument is similar to the proof of Lipschitz continuity in space. We compare the two value functions by transporting an almost optimal trajectory from time $t$ to time $t'$. Without loss of generality, assume that $t' > t$.
Let $\eta>0,$ there exists a trajectory $y \in AC([0,t], \mathbb{R}^d)$ with $y_t = x$ such that
\[
u(t,x) \leq u(s,y_s) +\int_{s}^{t}R(y_\tau,I_\tau)d\tau- \int_s^t L(-y'_\tau)\,d\tau + \eta.
\]
We extend $y$ on $[t,t']$ by setting
\[
\tilde{y}_\tau = x \quad \text{for } \tau \in [t,t'].
\]
Then $\tilde{y}'_\tau = 0$ on $[t,t']$, and $\tilde{y}_{t'} = x$, $\tilde{y}_s = y_s,~~\forall s\in [0,t]$.
By the variational representation of $u(t',x)$, we obtain
\begin{align*}
u(t',x) &\geq u(s,\tilde{y}_s)+\int_{s}^{t'}R(\tilde{y}_\tau,I_\tau)d\tau - \int_s^{t'} L(-\tilde{y}'_\tau)\,d\tau\\
&\geq u(s,y_s)+\int_{s}^{t}R(y_\tau,I_\tau)d\tau+\int_{t}^{t'}R(x,I_\tau)d\tau- \int_{s}^{t} L(-y'_\tau)\,d\tau - \int_t^{t'} L(0)\,d\tau.
\end{align*}
Therefore, from assumption \ref{item:23}, we obtain that
\[
u(t',x) - u(t,x) \geq -(\overline{R}+L(0))(t' - t) - \eta.
\]
We now take an almost optimal trajectory $z$ for $u(t',x)$ with $z_{t'} = x$, and restrict it to $[s,t]$. We obtain
\[
u(t,x) \geq u(s,z_s) +\int_{s}^{t}R(z_\tau,I_\tau)d\tau- \int_s^t L(-{z}'_\tau)\,d\tau.
\]
Hence,
\[
u(t',x) - u(t,x) \leq \int_{t}^{t}R(z_\tau,I_\tau)d\tau - \int_t^{t'} L(-{z}'_\tau)\,d\tau + \eta
\leq \overline{R} (t'-t)\int_t^{t'} |{z}'_\tau|\,d\tau + \eta,
\]
where we used that $L(p) \leq |p|$.\\
Moreover, we have that $L(p) \to +\infty$ as $|p| \to +\infty$. It is easy to see, for instance by choosing
\[
q = \frac{1}{\sqrt{d+1}}(\operatorname{sign}(p_i))_{1 \le i \le d},
\]
in the Legendre transform, that
\[
L(p) \ge C_1 |p| - C_2,
\]
for some positive constants $C_1, C_2$.
In particular, this implies that  there exists $M > 0$ such that the supremum in the variational representation is attained over trajectories satisfying
\[
\|y'\|_{L^\infty([0,t])} \le M.
\]
Thus
\begin{equation}
u(t',x) - u(t,x) \leq (M+\overline{R}) |t' - t|.
\end{equation}
Since $L$ is the Legendre transform of $H$, and $H$ is positive, it follows that $L(0)$ is negative. Therefore 
\begin{equation}
\vert u(t',x) - u(t,x)\vert \leq (M+\overline{R}) |t' - t|.
\end{equation}
\end{proof}
  
\subsubsection{Proof of Theorem 5.1}

Let us  prove that $u$ is a supersolution of \eqref{eq:HJ3}.  Let $(t,x)\in (0,+\infty)\times \mathbb{R}^d,$ and $\phi \in C^1((0,+\infty)\times \mathbb{R}^d),$ such that $u-\phi$ has a minimum at $(t,x).$ Let $s<t$, we have for all $y\in AC([0,t],\mathbb{R}^d))$ such that $y_t=x,$ and $y'=p$ (a,e) in $[0,t],$ with $p\in \mathbb{R}^d,$ that 
\begin{equation}
  u(t,x)\geq u(s,y_{s})+\int_{s}^{t}R(y_\tau,I_\tau)d\tau-\int_{s}^{t}L(-p)d\tau. 
\end{equation}
Note that $(t,x)$ is a minimum point of $u-\phi,$ then for $s$ close enough to $t,$ we obtain 
\begin{equation}
    u(t,x)-\phi(t,x))\leq u(s,y_s)-\phi(s,y_s).
\end{equation}
This entails
\begin{equation*}
    \phi(t,x)-\phi(s,y_s)\geq \int_{s}^{t}R(y_\tau,I_\tau)d\tau-\int_{s}^{t}L(-p)d\tau.
\end{equation*}
Thus,
\begin{equation*}
\int_{s}^{t}\partial_\tau(\phi(\tau,y_\tau)d\tau \geq \int_{s}^{t}R(y_\tau,I_\tau)d\tau-\int_{s}^{t}L(-p)d\tau.
\end{equation*}
Therefore, 
\begin{equation}
\int_{s}^{t}\partial_t\phi(\tau,y_\tau)d\tau \geq \int_{s}^{t}R(y_\tau,I_\tau)d\tau+\int_{s}^{t}(-y'_\tau\partial_x\phi(\tau,y_\tau)-L(-p))d\tau.
\end{equation}
We divide by $t-s,$ and letting $s$ go to $t,$ we get
\begin{equation}
    \partial_t\phi(t,x)\geq \liminf_{s\to t}R(x,I_s)+-p.\nabla \phi(t,x)-L(-p).
\end{equation}
Since $p$ is arbitrary, we conclude that 
\begin{equation}
     \partial_t\phi(t,x)\geq \liminf_{s\rightarrow t}R(x,I_s)+H(\nabla \phi(t,x)).
\end{equation}
Let us now  prove that \begin{equation}
\label{eq:nabla}
    \vert \nabla \phi(t,x)\vert \leq 1.\end{equation}
Since $(t,x)$ is a minium point of $u-\phi,$ and using the fact that $u$ is 1-Lipschitz, we obtain that for all $h\in \R$ we have 
\begin{equation}
    \phi(t,x+h)-\phi(t,x)\leq u(t,x+h)-u(t,x)\leq \vert h\vert,
\end{equation}
this implies \eqref{eq:nabla}. 

The proof that $u$ is a subsolution follows by similar arguments. The constraint $\max_{x\in \mathbb{R}^d} u(t,x) = 0$ is independent of the variational formulation and is proved exactly as in \cite{BMP,J}.
\newpage
\textbf{Acknowledgments}: The author thanks Nicolas Champagnat, Sylvie Méléard and Sepideh Mirrahimi for fruitful discussions and comments. This work is funded by the European Union (ERC,
SINGER, 101054787). Views and opinions expressed are however those of the author
only and do not necessarily reflect those of the European Union or the European Research
Council. Neither the European Union nor the granting authority can be held responsible
for them.

\newpage
\nocite{*}
\bibliographystyle{plain}
\bibliography{ref.bib}
\end{document}